\newtheorem{lemma}{Lemma}[section]
\newtheorem{theorem}{Theorem}[section]
\newtheorem{corollary}{Corollary}[section]
\newtheorem{remark}{Remark}[section]
\newtheorem{example}{Example}[section]
\begin{document}
\title{{\sf Convergence of the Lasserre Hierarchy of SDP Relaxations for
Convex Polynomial Programs without Compactness\thanks{ Research was partially supported by a grant from the Australian Research Council}} }

\author{\sc V. Jeyakumar\thanks{
Department of Applied Mathematics, University of New South Wales,
Sydney 2052, Australia. E-mail: v.jeyakumar@unsw.edu.au}, T. S. Ph\d{a}m\thanks{ Department of Mathematics, University of Dalat, 1, Phu Dong Thien Vuong, Dalat, Vietnam. Email: sonpt@dlu.edu.vn. This work was carried out while the author was a visitor to the School of Mathematics, University
of New South Wales, Sydney, Australia. Research was partially funded by the Vietnam National Foundation for Science and Technology Development (NAFOSTED)} \ and \ G. Li\thanks{Department of Applied
Mathematics, University of New South Wales, Sydney 2052, Australia.
E-mail: g.li@unsw.edu.au} }

\date{April 12, 2013}

\maketitle
\begin{abstract}
The Lasserre hierarchy of semidefinite programming (SDP) relaxations is an effective scheme for
finding computationally feasible SDP approximations of polynomial optimization over \textit{compact} semi-algebraic sets.
In this paper, we show that, for convex polynomial optimization, the Lasserre hierarchy with a slightly extended quadratic module \textit{always converges asymptotically} even in the face of non-compact semi-algebraic feasible sets. We do this by exploiting a coercivity property of convex polynomials that are bounded below. We further establish that the positive definiteness of the Hessian of the associated Lagrangian at a saddle-point (rather than the objective function at each minimizer) guarantees \textit{finite convergence} of the hierarchy. We obtain finite convergence by first establishing a new sum-of-squares polynomial representation of convex polynomials over convex semi-algebraic sets under a saddle-point condition. We finally prove that the existence of a saddle-point of the Lagrangian for a convex polynomial program is also necessary for the hierarchy to have finite convergence.

\bigskip
{\bf Keywords:} Convex polynomial optimization, sums of squares of polynomials,  Positivstellensatz, representations, semidefinite programming

\bigskip
{\bf AMS subject class:} 90C60, 90C56, 90C26
\end{abstract}

\section{Introduction} \label{SectionIntroduction}

\noindent
\noindent When it comes to polynomial optimization over compact semi-algebraic feasible sets, Lasserre's hierarchy of
semidefinite programming (SDP) relaxations \cite{Lasserre2009} is an effective scheme for solving
polynomial optimization problems via computationally feasible approximations. The hierarchy has asymptotic convergence in the sense that the sequence of optimal values of the SDP relaxations converges to the optimal value of the original
problem \cite{Lasserre2009-1, Lasserre2009} under mild assumptions. It has finite convergence for convex polynomial optimization over compact semi-algebraic sets whenever the Hessian of the convex polynomial is positive definite at
each minimizer \cite{Klerk2011, Lasserre2009-1, Lasserre2009}, requiring strict
convexity of the convex polynomial (see Lemma 2.1 in Section 2). The proofs of these convergence hold in the compact case of the semi-algebraic feasible sets and they
rely on the powerful sum-of-squares polynomial representation of positive polynomials
over compact semi-algebraic sets from real algebraic geometry \cite{Put,sch}.
%
%
\medskip

\noindent The purpose of this paper is to show that, in the case of a \textit{non-compact} semi-algebraic feasible set of a convex polynomial program, an extended {\it quadratic module}, generated in terms of both the convex polynomial objective function and the polynomials associated with the semi-algebraic set, leads to a converging hierarchy of semidefinite programming (SDP) relaxations.

%
%
\medskip

\noindent
{\bf \textit{Main Contributions}}\\
\noindent
We establish that the Lasserre hierarchy of SDP approximations with the extended quadratic module {\em always converges asymptotically} for convex
polynomial programs without any compactness assumptions on the feasible sets. We also show that the positive definiteness of the Hessian of the Lagrangian at a saddle-point guarantees finite convergence of the hierarchy.
\medskip

\noindent
We prove asymptotic convergence of the hierarchy by exploiting a coercivity property of convex polynomials that are bounded below. On the other hand, we derive finite convergence by first proving that a convex polynomial with {\it positive definite Hessian at a single point} is strictly convex and coercive, and then  establishing that the positive definiteness of the Hessian of the Lagrangian at a saddle-point guarantees a sum-of-squares representation of a convex polynomial over a convex (not necessarily compact) semi-algebraic set.
\medskip

\noindent
Moreover, we establish that the existence of a saddle-point of the associated Lagrangian at every
minimizer of the convex problem is necessary for the Lasserre hierarchy to have finite convergence. We give simple numerical examples explaining the assumptions of our theorems.
\medskip

\noindent
{\bf \textit{Significance of our Contributions}}\\
\noindent
The Lasserre hierarchy of SDP approximations with our extended quadratic module is significant for convex polynomial programming
because it not only converges asymptotically without any regularity conditions on the feasible set but also exhibits finite convergence without the standard strict convexity requirement of the objective function. Our conditions for finite convergence are given in terms of positive
definiteness of the associated Lagrangian
function rather than just the objective function (c.f. \cite{Klerk2011, Lasserre2009-1}).
\medskip

\noindent
The significance of our sum-of-squares polynomial representation is that it allows us to construct a hierarchy of
SDP approximations in terms of \textit{quadratic modules rather than
pre-orderings} \cite{Demmel2007, HaHV2008-1, HaHV2009-1} even in the case of convex programs with non-compact feasible sets. Also, our representation
extends the corresponding known representations of convex polynomials over compact feasible sets \cite{Klerk2011, Lasserre2009-1}.
\medskip

%
%
%

\section{Convergence of Lasserre Hierarchy without Compactness} \label{SectionOptimization}

We begin by fixing notation and definitions. Throughout this paper, $\mathbb{R}^n$ denotes the Euclidean space with dimension $n$. The inner product in $\mathbb{R}^n$ is defined by $\langle x,y\rangle := x^Ty$ for all $x,y \in \mathbb{R}^n$. The non-negative orthant of $\mathbb{R}^n$ is denoted by $\mathbb{R}^n_{+}$ and is defined by
$\mathbb{R}^n_{+}:=\{(x_1,\ldots,x_n) \in \mathbb{R}^n \ | \ x_i \ge 0\}$. Denote by $\mathbb{R}[\underline{x}]$ the ring of polynomials in $x := (x_1,x_2, \ldots, x_n)$ with real coefficients.

A symmetric $n \times n$ matrix ${\mathcal A}$ is said to be {\em positive definite},
denoted by ${\mathcal A} \succ 0,$  if  $x^T{\mathcal A}x > 0$ for all $x \in \mathbb{R}^n, x \ne 0.$ The gradient and the Hessian of a real polynomial $f\in \mathbb{R}[\underline{x}]$ at a point $x^*$ are denoted by $\nabla f(x^*)$ and $\nabla^2 f(x^*)$ respectively.
Moreover, for a function $L \colon \mathbb{R}^n \times \mathbb{R}^m \rightarrow \mathbb{R}$, we use $\nabla^2_{xx}L(x,\lambda)$ to denote the second order derivative with respect to the variable $x$.

We say that a real polynomial $f\in \mathbb{R}[\underline{x}]$ is {\em sum of squares} (SOS) if there exist
real polynomials $f_j, j = 1,\ldots, r,$ such that $f=\sum_{j=1}^rf_j^2$. The set of all sum-of-squares
real polynomials is denoted by $\Sigma^2.$ An important property of the sum-of-squares polynomials is
that checking a polynomial is sum of squares or not is equivalent to solving a semidefinite linear programming problem. For details see \cite{Lasserre2009, Marshall2008, Parrilo2000}.

Recall that a {\em quadratic module generated by polynomials} $-g_1, \ldots, -g_m \in {\Bbb R}[\underline{x}]$ is defined as
$$\mathbf{M}(-g_1, \ldots, -g_m) := \{ \sigma_0 - \sigma_1 g_1 - \cdots - \sigma_m g_m \ | \ \sigma_i \in \Sigma^2, i = 0, 1, \ldots, m\}.$$
It is a subset of polynomials that are non-negative on the set
$\{x \in {\Bbb R}^n \ | \ g_i(x) \le 0, i  =1, \ldots, m\}$ and possess a very nice certificate for this property.

The quadratic module $\mathbf{M}(-g_1, \ldots, -g_m)$ is called {\em Archimedean} \cite{Marshall2008,Schweighofer2006} if there exists
$p \in \mathbf{M}(-g_1, \ldots, -g_m)$ such that  $\{x: p(x) \ge 0\}$ is compact. When the quadratic module $\mathbf{M}(-g_1, \ldots, -g_m)$ is compact, we have the
following important characterization of positivity of a polynomial over a semialgebraic set.

 \begin{lemma}{\bf (Putinar positivstellensatz) \cite{Put}}\label{Putinar}
 Let $f,g_j$, $j=1,\ldots,m$, be real polynomials with $K:=\{x:g_j(x) \le 0,j=1,\ldots,m\} \neq \emptyset$. Suppose that $f(x)>0$ for all $x \in K$ and $M(-g_1,\ldots,-g_m)$ is Archimedean. Then,
$f \in M(-g_1,\ldots,-g_m)$.
\end{lemma}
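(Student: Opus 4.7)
The plan is to argue by contradiction, combining a Hahn--Banach separation in $\mathbb{R}[\underline{x}]$ with a Haviland-type moment representation, with the Archimedean hypothesis supplying the analytic backbone.

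Write $\mathbf{M} := \mathbf{M}(-g_1,\ldots,-g_m)$ and view it as a convex cone in the real vector space $\mathbb{R}[\underline{x}]$. First I would unpack the Archimedean hypothesis into the working form: if there exists $p \in \mathbf{M}$ with $\{p \ge 0\}$ compact, then for every $h \in \mathbb{R}[\underline{x}]$ there exists $N \in \mathbb{N}$ with $N \pm h \in \mathbf{M}$. This is a routine calculation exploiting closure of $\mathbf{M}$ under addition and multiplication by elements of $\Sigma^2$, and as a byproduct makes $K$ itself compact.

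Now assume, for contradiction, that $f \notin \mathbf{M}$. Endow $\mathbb{R}[\underline{x}]$ with its finest locally convex topology and apply Hahn--Banach to separate the point $f$ from the convex cone $\mathbf{M}$. This yields a linear functional $L : \mathbb{R}[\underline{x}] \to \mathbb{R}$ with $L(p) \ge 0$ for every $p \in \mathbf{M}$ and $L(f) \le 0$; normalizing, I may assume $L(1) = 1$. The heart of the argument is to realize $L$ as integration against a probability measure supported on $K$. The Archimedean inequalities $N_{\alpha} \pm x^{\alpha} \in \mathbf{M}$ give uniform control $|L(x^{\alpha})| \le N_{\alpha}$, so $(L(x^{\alpha}))_{\alpha}$ is a bounded Hausdorff-type moment sequence on a compact box $B \supseteq K$. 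Haviland's theorem (equivalently, extending $L$ to $C(B)$ via Stone--Weierstrass density and invoking Riesz representation) then delivers a Borel probability measure $\mu$ on $B$ with $L(q) = \int q \, d\mu$ for every $q \in \mathbb{R}[\underline{x}]$.

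It remains to confine $\mathrm{supp}(\mu)$ to $K$ and conclude. Since $-q^2 g_j \in \mathbf{M}$ for every polynomial $q$ and every $j$, we have $\int q^2 (-g_j) \, d\mu \ge 0$ for all such $q$; approximating the indicator of $\{g_j > 0\} \cap B$ by squares of polynomials then forces $g_j \le 0$ $\mu$-almost everywhere, whence $\mathrm{supp}(\mu) \subseteq K$. Combined with $\mu(K) = L(1) = 1$ and $f > 0$ on the nonempty compact set $K$, this gives $L(f) = \int_K f \, d\mu > 0$, contradicting $L(f) \le 0$, and hence $f \in \mathbf{M}$ after all.

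The main obstacle is the passage from the abstract separating functional $L$ to a representing Borel measure: this step is not formal and is exactly where the Archimedean hypothesis is indispensable, since it both localizes $\mathrm{supp}(\mu)$ to a compact box and supplies the uniform moment bounds needed to invoke Haviland. The subsequent localization of $\mathrm{supp}(\mu)$ inside $K$ is a secondary but genuinely analytic point, requiring that characteristic functions of semi-algebraic subsets of $B$ be approximable in $L^1(\mu)$ by squares of polynomials.
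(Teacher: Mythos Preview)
The paper does not supply its own proof of this lemma: it is stated as Putinar's Positivstellensatz with a citation to \cite{Put} and is used only as a black box in the proof of Theorem~\ref{AsymptoticConvergenceTheorem}. There is therefore no in-paper argument to compare your proposal against.

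Your sketch is the standard functional-analytic proof of Putinar's theorem (Hahn--Banach separation followed by a moment representation), and the overall strategy is correct. Two steps deserve more care than you give them. First, the passage from the paper's definition of Archimedean (``there exists $p \in \mathbf{M}$ with $\{p \ge 0\}$ compact'') to the working form ``$N \pm h \in \mathbf{M}$ for every $h$'' is not a routine calculation: this is W\"ormann's trick (or the Jacobi--Prestel argument), which shows that the set $\{h \in \mathbb{R}[\underline{x}] : N \pm h \in \mathbf{M} \text{ for some } N\}$ is a subring containing each coordinate $x_i$, hence is all of $\mathbb{R}[\underline{x}]$. Second, the Hahn--Banach separation of the point $f$ from the cone $\mathbf{M}$ is not automatic in infinite dimensions, since $\mathbf{M}$ is not known to be closed; what makes it work is precisely that the Archimedean bounds $1 \pm \tfrac{1}{N}h \in \mathbf{M}$ force $1$ to be an algebraic interior point of $\mathbf{M}$, so the Eidelheit separation theorem applies. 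With these two points filled in, your argument is complete and matches the proof one finds in, e.g., Marshall's monograph \cite{Marshall2008}.
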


In this section we examine the Lasserre SDP relaxation scheme to the following convex programming problem with polynomials:
\begin{eqnarray} \label{OptimizationProblem}
f^* &:= &\min_{x\in \mathbb{R}^n}\{ f(x) \ | \ g_i(x) \le 0, \  i=1,2,\ldots,m\},
\end{eqnarray}
where $f, g_1, \ldots, g_m$ are convex polynomials on  $\mathbb{R}^n$ and
$$K := \{x\in\mathbb{R}^n \ | \ g_1(x) \le 0, \ldots, g_m(x) \le 0 \} \neq \emptyset.$$

Let $c \in {\Bbb R}$ be such that $c > f(x^0)$ for some $x^0 \in K.$ For each integer $k,$ we define the truncated quadratic module $\mathbf{M}_k$ generated by the polynomials $c - f$ and $-g_1, \ldots, - g_m$ as
\begin{eqnarray*}
\mathbf{M}_k \ := \ \{ \sigma_0 - \sum_{i = 1}^m \sigma_i g_i + \sigma (c - f)  & | & \sigma,\sigma_0, \sigma_1,\ldots,\sigma_m \in \Sigma^2 \subset {\Bbb R}[\underline{x}], \\
&&  \deg \, \sigma_0 \le 2k, \, \deg \, \sigma_i g_i \le 2k \textrm{ and } \deg \, \sigma (c - f) \le 2k\}.
\end{eqnarray*}
Consider the following relaxation problem
\begin{equation} \label{SOSRelaxationProblem1}
f_k^* :=  \sup\{\mu \in {\Bbb R} \ | \ f - \mu \in \mathbf{M}_{k}   \}.
\end{equation}

As is well known, the problem of computing the supremum $f^*_k$ can be reduced to a semidefinite program (see \cite{Lasserre2001}, \cite{Lasserre2009}, \cite{Marshall2008}, \cite{Parrilo2000}). Moreover, we can see that
$$f^*_k \le f^*_{k + 1} \le \cdots \le f^*.$$

The following useful coercivity property of a convex polynomial, that is bounded below, allows us to establish that the Lasserre hierarchy of
SDP relaxations of Problem (\ref{OptimizationProblem}) {\it has an  asymptotic  convergence} in the
sense that $f_k^* \uparrow f^*$ as $k \rightarrow \infty$. Recall that a real-valued function $f$ on $\mathbb{R}^n$ is coercive on $\mathbb{R}^n$ whenever $\displaystyle\liminf_{||x||\to \infty}f(x) =+\infty$.

\begin{lemma}[{\bf Coercivity and Convex Polynomials}]\label{LA}
 Let $h \in \mathbb{R}[\underline{x}]$ be a convex polynomial which is bounded below on $\mathbb{R}^n$. Then there exist an orthogonal $n \times n$ matrix $A$ and a coercive polynomial $g \colon \mathbb{R}^l \rightarrow \mathbb{R}$, $1 \le l \le n,$ such that
 \begin{eqnarray*}
h(Ax)=h(A(x_1,\ldots, x_l, \ldots,x_n)^T) = g(x_1,\ldots,x_l), \ \textrm{ for } \ x = (x_1,\ldots, x_l, \ldots,x_n)^T \in \mathbb{R}^n.
\end{eqnarray*}
In particular, $h$ attains its infimum on ${\Bbb R}^n.$
\end{lemma}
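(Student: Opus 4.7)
The plan is to identify a maximal subspace $V\subseteq \mathbb{R}^n$ of directions along which $h$ is constant, use an orthogonal change of coordinates to separate $V$ from $V^\perp$, and show that the resulting reduced polynomial $g$ on $V^\perp \cong \mathbb{R}^l$ (with $l = n - \dim V$) is coercive. The whole argument rests on one easy one-variable observation: a convex univariate polynomial $\phi(t)$ that is bounded below on $\mathbb{R}$ is either constant or tends to $+\infty$ as $|t|\to\infty$. Indeed, boundedness below forces $\phi$ to have even degree $2d$ with positive leading coefficient; if $d=0$ then $\phi$ is constant, and otherwise $\phi(t)\to+\infty$ from both sides.

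I would define $V := \{d\in\mathbb{R}^n : t\mapsto h(x+td) \text{ is constant for every } x\in\mathbb{R}^n\}$. Applying the univariate observation to the convex polynomial $\phi_{x,d}(t) := h(x+td)$, one sees that $d\in V$ if and only if $\phi_{x,d}$ is bounded above (equivalently, bounded) for some/every $x$. From this it follows that $V$ is closed under negation and scalar multiplication. For closure under addition, if $d_1,d_2\in V$, the restriction $(s,t)\mapsto h(x+sd_1+td_2)$ is a bounded-below convex polynomial in two variables, constant in each coordinate, hence constant on the plane; evaluating on the diagonal gives $d_1+d_2\in V$. Thus $V$ is a linear subspace and $h$ is constant on every coset $x+V$. (If $V = \mathbb{R}^n$, then $h$ is constant and the statement is degenerate; otherwise $\dim V < n$.) Now choose an orthonormal basis $e_1,\ldots,e_n$ of $\mathbb{R}^n$ such that $e_{l+1},\ldots,e_n$ span $V$, and let $A$ be the orthogonal matrix with columns $e_i$. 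For $x=(x_1,\ldots,x_n)^T$, writing $Ax = \sum_{i=1}^l x_i e_i + v$ with $v\in V$ and using constancy along $V$ yields $h(Ax) = h\bigl(\sum_{i=1}^l x_i e_i\bigr)=: g(x_1,\ldots,x_l)$, a convex polynomial on $\mathbb{R}^l$.

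The main remaining step, and the one I expect to be the technical core, is to verify that $g$ is coercive. By the definition of $V$, for any nonzero $e\in V^\perp$ the direction $e$ is not in $V$, so $h(te)$ is a nonconstant, bounded-below, convex univariate polynomial in $t$; by the one-variable observation it tends to $+\infty$ as $|t|\to\infty$. Translating through $A$, this says $g$ grows to infinity along every ray. To upgrade ray-growth to genuine coercivity, I would argue by contradiction using convexity: suppose $\|y^{(k)}\|\to\infty$ in $\mathbb{R}^l$ while $g(y^{(k)})\le M$, and pass to a subsequence with $y^{(k)}/\|y^{(k)}\|\to e$, $\|e\|=1$. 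For fixed $t>0$ and $k$ large,
\begin{equation*}
g\!\left(t\,\frac{y^{(k)}}{\|y^{(k)}\|}\right) \le \left(1-\frac{t}{\|y^{(k)}\|}\right)g(0) + \frac{t}{\|y^{(k)}\|}g(y^{(k)}) \le g(0)+M,
\end{equation*}
and letting $k\to\infty$ gives $g(te)\le g(0)+M$ for all $t>0$, contradicting the ray-growth along $e$. Coercivity of $g$ then implies that $g$ has compact sublevel sets and attains its infimum on $\mathbb{R}^l$; pulling back through $A$ shows $h$ attains its infimum on $\mathbb{R}^n$. The only subtlety I anticipate is bookkeeping in the $V$-is-a-subspace step—every other ingredient is a clean consequence of convexity plus the polynomial structure.
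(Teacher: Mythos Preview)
Your plan is essentially the paper's proof: the same invariance subspace $E_h=V$, the same orthogonal change of variables, and the same convexity-plus-subsequence contradiction for coercivity of $g$. The only substantive difference is that the paper runs the convex-combination estimate from an \emph{arbitrary} base point $a$ rather than from $0$, obtaining $g(a+tv)=g(a)$ for all $a$ directly and hence placing the limiting direction in $E_h\cap E_h^{\perp}$; this is exactly the content of your asserted-but-unproved ``some/every $x$'' equivalence, so either justify that claim (a short midpoint argument: if $h(x_0+td)\equiv c$ then $h(\tfrac{x_0+x}{2}+td)\le \tfrac12 h(x_0+2td)+\tfrac12 h(x)=\tfrac{c+h(x)}{2}$ is bounded, hence constant, and one more step reaches any $x$) or simply adopt the paper's variant. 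Minor slip: your final bound should be $\max\{g(0),M\}$ rather than $g(0)+M$, though any constant independent of $k$ suffices.
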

\begin{proof} The proof is given in Appendix.
\end{proof}

The following known existence result  of a solution of convex polynomial programs will also be useful for the proof of asymptotic convergence.
\begin{lemma}\label{existence} \cite{Belousov2002}
 Let $f_0,f_1,\ldots,f_m$ be convex polynomials on $\mathbb{R}^n$. Let $C:=\{x\in \mathbb{R}^n : f_i(x) \le 0, i=1,\ldots,m\}$. Suppose that $\inf_{x \in C} f_0(x)>-\infty$. Then, $\displaystyle {\rm argmin}_{x \in C}f_0(x) \neq \emptyset$.
\end{lemma}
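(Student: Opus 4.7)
The plan is to argue by contradiction via a minimizing-sequence/recession-direction analysis, exploiting the rigidity of polynomials to turn one-sided inequalities into equalities, and then to reduce to lower dimension by induction on $n$.

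\textbf{Step 1 (recession direction).} Suppose, for contradiction, that $f^* > -\infty$ is not attained. Choose $\{x^k\} \subset C$ with $f_0(x^k) \to f^*$. Continuity of each $f_i$ makes $C$ closed, so a bounded subsequence would produce a minimizer; hence $\|x^k\| \to \infty$, and along a subsequence $d^k := x^k/\|x^k\| \to d$ with $\|d\| = 1$. Fixing any $x_0 \in C$ and using convexity of each $f_i$ together with $f_i(x^k) \le 0$,
$$ f_i\!\left(x_0 + t\,\frac{x^k - x_0}{\|x^k - x_0\|}\right) \le \left(1 - \frac{t}{\|x^k - x_0\|}\right) f_i(x_0), \qquad 0 \le t \le \|x^k - x_0\|, $$
so letting $k \to \infty$ yields $f_i(x_0 + t d) \le f_i(x_0) \le 0$ for every $t \ge 0$. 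Thus $d$ is a recession direction of $C$. The same computation for $f_0$, combined with $f_0(x^k) \le f^* + 1/k$, gives $f_0(x_0 + td) \le f_0(x_0)$ for all $x_0 \in C$ and $t \ge 0$.

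\textbf{Step 2 (polynomial rigidity).} The key advantage over a general convex objective is that $\psi(t) := f_0(x_0 + td)$ is a \emph{univariate polynomial}, and by Step 1 it is bounded on $[0,\infty)$ (above by $f_0(x_0)$, below by $f^*$). A real polynomial that is bounded on a half-line is constant, so $\psi \equiv f_0(x_0)$; hence $f_0$ is constant along every ray in direction $d$ emanating from $C$. Differentiating and using that the directional derivative $\partial_d f_0$ vanishes on a full-dimensional subset of $\mathbb{R}^n$ (or otherwise extending the identity polynomially) upgrades this to $f_0(x + td) = f_0(x)$ for all $x \in \mathbb{R}^n$ and $t \in \mathbb{R}$.

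\textbf{Step 3 (dimension reduction and induction).} Perform an orthogonal change of variables placing $d$ along $e_n$; then $f_0(x) = \tilde f_0(x_1,\dots,x_{n-1})$ is independent of $x_n$. For a suitable level $c$, substituting $x_n = c$ into each $f_i$ produces convex polynomials $\tilde f_i^{(c)}$ in $n-1$ variables, so the slice $C \cap \{x_n = c\}$ is again a convex polynomial sublevel set. Showing $\inf \tilde f_0$ over the slice equals $f^*$ reduces the problem to $n-1$ variables, and the inductive hypothesis closes the argument; the base case $n=1$ is handled by Lemma~\ref{LA} applied to $f_0$ on a closed convex subset of $\mathbb{R}$.

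The delicate point is Step 3: $d$ is only a one-sided recession direction of $C$, so translating approximate minimizers along $-d$ into a prescribed slice may exit $C$ even though $f_0$ is $d$-invariant. The cleanest way around this is to show that the optimal face $\{x \in C : f_0(x) = f^*\}$, once nonempty in a suitable slice by induction, is $d$-invariant in both directions and meets every slice; alternatively, one iterates the Step~1--2 construction on the sliced problem, each pass genuinely reducing dimension until the base case applies. Verifying that these slicing and translation operations preserve feasibility for the $f_i$ (which are \emph{not} known to be $d$-invariant, unlike $f_0$) while keeping convex-polynomial structure intact is the technical heart of the argument.
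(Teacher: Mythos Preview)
The paper does not prove this lemma at all; it is simply quoted from Belousov and Klatte \cite{Belousov2002}, so there is no in-paper argument to compare against. Your Steps~1 and~2 are the standard opening moves and are essentially correct (with the routine caveat that one should first restrict to the affine hull of $C$ so that the ``polynomial vanishing on an open set'' extension in Step~2 is justified).

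Step~3, however, has a genuine gap that you flag but do not close. Slicing at $\{x_n=c\}$ is the wrong reduction: your own minimizing sequence satisfies $(x^k)_n\to+\infty$ (since $x^k/\|x^k\|\to d=e_n$), so no fixed slice captures it. The function $\phi(c):=\inf\{f_0(x):x\in C,\ x_n=c\}$ is non-increasing in $c$ with $\inf_c\phi(c)=f^*$, but nothing you have written forces $\phi(c)=f^*$ for any finite $c$. Your proposed fix~(a) is circular (nonemptiness of the optimal face is exactly what you are trying to prove), and fix~(b) is too vague: applying induction to a slice yields a minimizer with value $\phi(c)$, not $f^*$, and it is unclear what ``iterating'' would mean once you are already in dimension $n-1$.

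The missing structural fact is this. In Step~1 you showed that $t\mapsto f_i(x_0+td)$ is bounded above on $[0,\infty)$; being a one-variable \emph{convex} polynomial, it must therefore be \emph{affine} in $t$. After rotating $d$ to $e_n$ this gives $f_i(x',x_n)=a_i(x')+b_i(x')\,x_n$. Now convexity of $f_i$ forces the $(x_n,x_n)$ entry of its Hessian to vanish, and positive semidefiniteness then kills the entire $x_n$-row, so $b_i$ is a \emph{constant}; moreover $b_i\le 0$ by Step~1. Consequently the projection
\[
\pi(C)=\{x'\in\mathbb{R}^{n-1}:a_i(x')\le 0\ \text{for all }i\text{ with }b_i=0\}
\]
is again defined by convex polynomial inequalities, $f_0=\tilde f_0(x')$ depends only on $x'$, and $\inf_{\pi(C)}\tilde f_0=f^*$. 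Induction on this \emph{projected} problem (not a slice) produces $x'^*\in\pi(C)$ with $\tilde f_0(x'^*)=f^*$, and any feasible lift $(x'^*,x_n^*)\in C$ is then a minimizer of the original problem. Equivalently, once you know $b_i$ is constant you can check that $x'^*$ lies in the slice $\{x_n=c\}$ for all sufficiently large $c$, so $\phi(c)=f^*$ eventually; but either way the ``$b_i$ constant'' observation is the step your outline is missing.
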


\begin{theorem}[{\bf Asymptotic Convergence}] \label{AsymptoticConvergenceTheorem}
For Problem~{\rm (\ref{OptimizationProblem})}, let $x^*$ be a minimizer. Then,
$\displaystyle\lim_{k\to \infty}f^*_k = f^*$.
\end{theorem}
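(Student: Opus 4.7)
The strategy is to show that, for every $\varepsilon > 0$, the polynomial $f - (f^* - \varepsilon)$ lies in $\mathbf{M}_k$ for some $k$. Combined with the already-known inequalities $f_k^* \le f_{k+1}^* \le f^*$, this gives $f_k^* \uparrow f^*$. Concretely, one seeks an SOS decomposition
$$f - (f^* - \varepsilon) \ = \ \sigma_0 - \sum_{i=1}^m \sigma_i g_i + \sigma\,(c - f), \qquad \sigma, \sigma_i \in \Sigma^2,$$
with bounded degrees. The natural approach is to invoke Putinar's Positivstellensatz (Lemma~\ref{Putinar}) on the extended quadratic module $\mathbf{M}(c-f, -g_1, \ldots, -g_m)$: since $f \ge f^*$ on $K$, we have $f - (f^* - \varepsilon) \ge \varepsilon > 0$ on $\tilde K_c := K \cap \{f \le c\}$, so if this module were Archimedean we would be done immediately.

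The core difficulty is that when $K$ is non-compact and $f$ has recession directions, the module $\mathbf{M}(c-f,-g_1,\ldots,-g_m)$ need not be Archimedean. This is precisely where Lemma~\ref{LA} intervenes. If $f$ were bounded below on all of $\mathbb{R}^n$, Lemma~\ref{LA} would supply an orthogonal change of variables under which $f$ becomes a coercive polynomial in the first $l \le n$ coordinates, which controls $\{f \le c\}$ and, together with the $g_i$'s, yields the needed Archimedean polynomial. To cope with the fact that $f$ itself need not be bounded below on $\mathbb{R}^n$ (e.g.\ $f(x_1,x_2) = x_1$ with $K = \mathbb{R}^2_+$), I would instead apply Lemma~\ref{LA} to a Lagrangian-type polynomial $L(x) := f(x) + \sum_i \mu_i g_i(x)$ with $\mu_i \ge 0$, chosen via convex duality (whose applicability here rests on the existence of the minimizer $x^*$ guaranteed by Lemma~\ref{existence}) so that $L$ is convex \emph{and} bounded below by $f^*$ on all of $\mathbb{R}^n$.

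Once $L$ is in Lemma~\ref{LA}'s range, I would use it to certify the Archimedean property of $\mathbf{M}(c-L,-g_1,\ldots,-g_m)$ and invoke Lemma~\ref{Putinar} on the strictly positive polynomial $L - (f^* - \varepsilon) \ge \varepsilon$, yielding an SOS identity
$$L - (f^* - \varepsilon) \ = \ \tilde\sigma_0 - \sum_{i=1}^m \tilde\sigma_i g_i + \tilde\sigma\,(c - L).$$
Substituting $L - f = \sum_i \mu_i g_i$ and regrouping gives
$$f - (f^* - \varepsilon) \ = \ \tilde\sigma_0 - \sum_{i=1}^m \bigl[\tilde\sigma_i + (1+\tilde\sigma)\mu_i\bigr] g_i + \tilde\sigma\,(c - f),$$
whose coefficients are again SOS because $\Sigma^2$ is closed under sums and products and $\mu_i = (\sqrt{\mu_i})^2 \in \Sigma^2$. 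This embeds $f - (f^* - \varepsilon)$ into $\mathbf{M}(c-f, -g_1, \ldots, -g_m)$, and the finite degrees of the witnesses place it in $\mathbf{M}_k$ for sufficiently large $k$.

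The hardest step is the construction of the multipliers $\mu_i$ so that $L$ is globally bounded below by $f^*$ without a Slater or compactness hypothesis on $K$; this is a form of strong duality tailored to convex \emph{polynomial} programs and is where the interplay between Lemma~\ref{existence} (existence of a minimizer) and Lemma~\ref{LA} (coercivity modulo an orthogonal change of variables) is essential. Everything else---the positivity of $f - (f^* - \varepsilon)$ on $\tilde K_c$, the degree-truncation step, and the algebraic manipulation converting an SOS representation for $L$ into one for $f$---is routine.
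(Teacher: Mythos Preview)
Your overall architecture matches the paper's: manufacture approximate multipliers, apply Lemma~\ref{LA} to the resulting Lagrangian, invoke Putinar, and regroup into $\mathbf{M}(c-f,-g_1,\ldots,-g_m)$. But the two steps you flag as ``routine'' or leave to ``interplay'' are precisely where the work lies, and as stated they do not go through.

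\textbf{(1) Exact strong duality can fail.} You ask for $\mu\in\mathbb{R}^m_+$ with $L=f+\sum_i\mu_i g_i\ge f^*$ on all of $\mathbb{R}^n$. This is exact Lagrangian duality, and it is \emph{false} in general for convex polynomial programs without Slater: in the paper's own Example~3.3 ($f=x^2+y^2+x+y$, $g=x^2+y^2$, $f^*=0$) one has $\inf_{\mathbb{R}^2} L=-\tfrac{1}{2(1+\mu)}<0$ for every $\mu\ge0$. What \emph{is} available---and all that is needed---is the $\varepsilon$-approximate version $\inf L>f^*-\varepsilon$. The paper obtains it by a device you did not supply: relax the constraints to $g_i\le\delta$, show (via Lemma~\ref{existence} applied to an auxiliary least-squares problem) that $f+\varepsilon-f^*>0$ persists on $K_\delta$ for small $\delta>0$, and observe that $g_i(x^*)\le0<\delta$ makes Slater hold \emph{automatically} for $K_\delta$, so ordinary convex duality delivers the multipliers. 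Lemma~\ref{LA} plays no role in this step.

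\textbf{(2) The extended module need not be Archimedean in $\mathbb{R}^n$.} You plan to use Lemma~\ref{LA} to ``certify the Archimedean property of $\mathbf{M}(c-L,-g_1,\ldots,-g_m)$'' and then apply Putinar in $\mathbb{R}^n$. But if $L$ and all $g_i$ are independent of some coordinate (e.g.\ $n=2$, $m=0$, $f=x_1^2$), every element of that module has unbounded nonnegativity set, so the module is \emph{not} Archimedean and Lemma~\ref{Putinar} does not apply directly. The paper uses Lemma~\ref{LA} differently: it passes to the quotient $\mathbb{R}^l$ on which $h:=L-f^*+\varepsilon$ becomes a \emph{coercive} polynomial $g$, applies Putinar in $\mathbb{R}^l$ to the single-generator module $\mathbf{M}(c-f^*+\varepsilon-g)$ (which \emph{is} Archimedean there by coercivity), and then pulls the resulting identity $g=\sigma_0+\sigma_1(c-f^*+\varepsilon-g)$ back to $\mathbb{R}^n$ through the orthogonal change of variables. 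After that pull-back, your regrouping calculation is correct and coincides with the paper's final display.
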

\begin{proof}
{\bf [Positivity of Approximate Lagrangian by Convex Programming Duality]}. Let $\epsilon > 0$.
We first prove that there exists  $\lambda \in \mathbb{R}_+^{m}$ such that
$$f(x) -f(x^*) + \sum_{i=1}^m \lambda_i g_i(x) + \epsilon > 0, \qquad \forall x \in {\Bbb R}^n.$$

Note that, by the assumption, $f-f(x^*) \ge 0$ on $K$, where
$K := \{x\in\mathbb{R}^n \ | \ g_1(x) \le 0, \ldots, g_m(x) \le 0 \}$. Then, $f + \epsilon -f(x^*) > 0$ on $K$.
So, there exists $\delta > 0$ such that $f + \epsilon -f(x^*) > 0$ on $K_\delta$, where
$K_\delta := \{x\in\mathbb{R}^n \ | \ g_1(x) \le \delta , \ldots, g_m(x) \le \delta \}.$
Otherwise, we can find a sequence $\{\delta_k\}\subset \mathbb{R}_+$, $\delta_k\to 0$ and $\{x_k\}\subset \mathbb{R}^n$ such that $g_i(x_k)\le \delta_k$, $i=1,2,\ldots, m$ and $f(x_k)+\epsilon - f(x^*) \le 0$.  Then,
\begin{eqnarray*}
0&\le &\displaystyle\inf_{x,z_1,\ldots,z_m}\{ \sum_{i=1}^m z_i^2
 \ | \ f(x)+ \epsilon -f(x^*)\le 0, \ g_i(x)-z_i \le 0, \ i=1,\ldots,m \}\\
 &\le &\sum_{i=1}^m\delta_k^2=m\delta_k^2\rightarrow 0, \ \ \mbox{as} \ \ k\to \infty.
 \end{eqnarray*}
So, from Lemma \ref{existence} that  there exist
$y^* \in \mathbb{R}^n$ and $z^*=(z_1^*, \ldots, z_m^*)\in\mathbb{R}^{m}$
such that $f(y^*) +\epsilon -f(x^*)\le 0, \ g_i(y^*)-z_i^* \le 0, i=1,\ldots,m,$
and $\sum_{i=1}^m {z_{i}^*}^2=0$. Thus, $f(y^*) +\epsilon -f(x^*)\le 0$ and $g_i(y^*)\le 0$, $i=1,2,\ldots, m$. This is a contradiction.

Now, by Lemma \ref{existence}, $f$ attains its minimizer at $w^* \in K_\delta$ with
$f(w^*) + \epsilon -f(x^*) > 0$. As $g_i(x^*)\le 0 < \delta, i=1,2,\ldots, m$, the Slater condition holds
for the constraints, $g_1(x) \le \delta , \ldots, g_m(x)\le \delta$, and so,
by the convex programming duality \cite{Hiriart1993,Jeya_book,jeya1}, there exist $\lambda_i \ge 0, i=1,2,\ldots,m$ such that,
for all $x\in \mathbb{R}^n$, $f(x) + \sum_{i=1}^m \lambda_i (g_i(x)-\delta) \ge f(w^*)$.
This gives us that, for all $x\in \mathbb{R}^n$,
$f(x) + \sum_{i=1}^m \lambda_i g_i(x) \ge f(w^*) + \sum_{i=1}^m \lambda_i \delta
\ge f(w^*)> f(x^*) -\epsilon$.

\medskip

\noindent {\bf [Asymptotic Representation by Putinar Positivstellensatz]}. Let, for each $x \in \mathbb{R}^n$,
$$h(x):=f(x) -f(x^*) + \sum_{i=1}^m \lambda_i g_i(x) + \epsilon .$$ Then, $h$ is a convex polynomial which is positive on $\mathbb{R}^n$.
Lemma \ref{LA} shows that there exist an orthogonal $n \times n$ matrix $A$ and a coercive polynomial $g \colon \mathbb{R}^l \rightarrow \mathbb{R}$ such that
\begin{eqnarray}\label{eq:00pp}
h(A(x_1,\ldots, x_l, \ldots,x_n)) = g(x_1,\ldots,x_l), \ \ for \ x=(x_1,\ldots, x_l, \ldots,x_n) \in \mathbb{R}^n.
\end{eqnarray}
Let $T=\{x \in \mathbb{R}^n: h(x) \le c-f(x^*) +\epsilon\}$. Then, $T$ is nonempty.
As $g$ is coercive on $\mathbb{R}^l$, it follows from  (\ref{eq:00pp}) that
$$S := \{x \in \mathbb{R}^l \ | \ g(x_1,\ldots,x_l) \le c-f(x^*)+\epsilon \}$$
is a nonempty and compact set. The positivity of $h$ guarantees that $g > 0$ over $\mathbb{R}^l$, and in
particular $g > 0$ over $S.$ Let $p(x)=g(x)-c+f(x^*)-\epsilon$ for all $x \in \mathbb{R}^l$. Then ${\bf M}(-p)$ is Archimedean as $-p \in {\bf M}(-p)$ and $\{x: -p(x) \ge 0\}=S$ is compact. Then, Putinar Positivstellensatz (Lemma \ref{Putinar}) gives us that there exist sum-of-squares polynomials $\sigma_0,\sigma_1$ over $\mathbb{R}^l$ such that
$$g=\sigma_0+\sigma_1 (c-f(x^*)-g+ \epsilon).$$
From (\ref{eq:00pp}),  for each $x = (x_1,\ldots,x_l,x_{l+1},\ldots,x_n) \in \mathbb{R}^n,$ $h(Ax)=g(x_1,\ldots,x_l)$. So,
for each $x = (x_1,\ldots,x_l,x_{l+1},\ldots,x_n) \in \mathbb{R}^n,$
\[
h(Ax)= \sigma_0(x_1,\ldots,x_l)+ \sigma_1(x_1,\ldots,x_l) (c-h(Ax)-f(x^*)+ \epsilon ).
\]
Then, for each $z \in \mathbb{R}^n$,
\[
h(z)=\sigma_0\big((A^{-1}z)_1,\ldots,(A^{-1}z)_l\big) + \sigma_1\big((A^{-1}z)_1,\ldots,(A^{-1}z)_l\big) (c-h(z)-f(x^*)+\epsilon).
\]
Using the definition of $h$, we see that, for each $z \in \mathbb{R}^n$,
\begin{eqnarray*}
&&f(z) -f(x^*) + \sum_{i=1}^m \lambda_i g_i(z) + \epsilon \\
&=& \sigma_0\big((A^{-1}z)_1,\ldots,(A^{-1}z)_l\big) + \sigma_1\big((A^{-1}z)_1,\ldots,(A^{-1}z)_l\big) (c-f(z)-\sum_{i=1}^m \lambda_i g_i(z)).
\end{eqnarray*}
Thus, for each $z \in \mathbb{R}^n$,
\begin{eqnarray}\label{AR}
& & f(z) -f(x^*) + \epsilon \\ \nonumber
&=& \sigma_0\big((A^{-1}z)_1,\ldots,(A^{-1}z)_l\big) + \sigma_1\big((A^{-1}z)_1,\ldots,(A^{-1}z)_l\big) (c-f(z)) \\ \nonumber
&& -\sum_{i=1}^m \bigg(\sigma_1\big((A^{-1}z)_1,\ldots,(A^{-1}z)_l\big)\lambda_i+
\lambda_i\bigg)g_i(z) \ ,
\end{eqnarray}
where $z \mapsto \sigma_i\big((A^{-1}z)_1,\ldots,(A^{-1}z)_l\big)$, $i=0,1$, are sum-of-squares polynomials and $\lambda_i \ge 0$, for $i=1,2,\ldots, m$.

\noindent{\bf [Convergence from Asymptotic Representation]}.
Equation (\ref{AR}) shows that, for each $\epsilon > 0$,
$f  - f^*+\epsilon \in \mathbf{M}(-g_1, \ldots, -g_m, c - f).$
So, there exists $k \in \mathbb{N}$ such that
$f^*-\epsilon \le f_k^*$. This together with the fact that  $f^*_k \le f^*_{k + 1} \le \cdots \le f^*$ gives us that $\displaystyle\lim_{k\to \infty}f^*_k = f^*$.

\end{proof}

\section{Sums of Squares Representations and Finite Convergence} \label{SectionRepresentations}

In this section, we present new representation results for non-negativity of convex polynomials over
convex semi-algebraic sets. For related results, see
\cite{HaHV2010, Helton2010, Marshall2009, Nie2006, Schweighofer2005, Schweighofer2006} and other references therein.



The following Lemma on strict convexity and coercivity of convex polynomials plays a key role in proving the desired representation of convex polynomials and then the finite convergence of the Lasserre hierarchy.

\begin{lemma} [{\bf Hessian Condition for Coercivity and Strict Convexity}] \label{HessianLemma}
Let $f \in \mathbb{R}[\underline{x}] $ be a convex polynomial. If $\nabla^2 f(x_0)\succ 0$ at some point $x_0 \in \mathbb{R}^n$ then $f$ is coercive and strictly convex
on $\mathbb{R}^n$.
\end{lemma}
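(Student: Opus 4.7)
The plan is to first establish coercivity of $f$ by invoking Lemma \ref{LA}, and then deduce strict convexity from coercivity by a short polynomial argument.

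To apply Lemma \ref{LA} I first need $f$ to be bounded below, which is not immediate from the hypothesis. The trick is that $\nabla^2$ is insensitive to linear perturbations, so I replace $f$ by $\tilde f(x) := f(x) - \langle \nabla f(x_0), x - x_0 \rangle$, which is convex with $\nabla^2 \tilde f \equiv \nabla^2 f$ and satisfies $\nabla \tilde f(x_0) = 0$. Since a critical point of a convex function is a global minimizer, $\tilde f(x) \ge \tilde f(x_0)$ on $\mathbb{R}^n$. Lemma \ref{LA} then supplies an orthogonal matrix $A$ and a coercive polynomial $g : \mathbb{R}^l \to \mathbb{R}$ with $1 \le l \le n$ such that $\tilde f(Ay) = g(y_1,\ldots,y_l)$ for every $y \in \mathbb{R}^n$. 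The key step is to rule out $l < n$: if $l < n$, set $v := A e_n \ne 0$. For any $z = Ay \in \mathbb{R}^n$ one has $z + tv = A(y + t e_n)$, and since $g$ does not involve the $n$-th coordinate, $\tilde f(z + tv) = \tilde f(z)$ for all $t \in \mathbb{R}$. Differentiating twice in $t$ at $t=0$ yields $v^T \nabla^2 \tilde f(z) v = 0$ for every $z$, and in particular $v^T \nabla^2 f(x_0) v = 0$, contradicting $\nabla^2 f(x_0) \succ 0$. Hence $l = n$ and $\tilde f$ is coercive; since $f$ and $\tilde f$ differ by an affine function, $f$ itself is coercive.

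For strict convexity, suppose to the contrary that some distinct $a, b \in \mathbb{R}^n$ and some $\lambda_0 \in (0,1)$ satisfy $f(\lambda_0 a + (1-\lambda_0) b) = \lambda_0 f(a) + (1-\lambda_0) f(b)$. The standard equality case in Jensen's inequality for convex functions then makes $\phi(t) := f(a + t(b-a))$ affine on $[0,1]$, and being a polynomial it is affine on all of $\mathbb{R}$. But coercivity of $f$, together with $b - a \ne 0$, forces $\phi(t) \to +\infty$ as $|t| \to \infty$, which no affine univariate function can satisfy; contradiction.

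I expect the coercivity step to be the main obstacle: passing from a purely local hypothesis at a single point $x_0$ to a global growth statement requires both the linear reduction that makes Lemma \ref{LA} applicable and the recognition that any lost dimension in the decomposition would immediately destroy the positive definiteness of the Hessian at $x_0$. Once coercivity is secured, the incompatibility of ``affine on a line'' with ``unbounded on that line'' makes strict convexity essentially automatic.
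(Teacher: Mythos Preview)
Your proof is correct, but your route to coercivity differs from the paper's. The paper argues directly: if the sublevel set $\{f \le c\}$ were unbounded, a sequence $a_k$ escaping to infinity would produce a direction $v$ along which $t \mapsto f(x_0 + tv)$ stays bounded for $t \ge 0$; but the Taylor expansion at $x_0$ has quadratic coefficient $\tfrac{1}{2}\langle \nabla^2 f(x_0)v,v\rangle > 0$, forcing this one-variable convex polynomial to have even degree $\ge 2$ and hence to be unbounded on $[0,\infty)$---a contradiction. This is entirely self-contained and never invokes Lemma~\ref{LA}.

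Your argument instead subtracts the tangent plane at $x_0$ to make $\tilde f$ bounded below, applies Lemma~\ref{LA}, and then uses the Hessian condition to kill any ``flat'' direction in the decomposition, forcing $l = n$. This is a legitimate and clean reduction: it isolates the structural content of Lemma~\ref{LA} and shows that positive definiteness of $\nabla^2 f$ at a single point is exactly what rules out the degenerate subspace $E_h$ appearing in that lemma's proof. The cost is a dependence on Lemma~\ref{LA}, whose proof is itself somewhat more involved than the paper's direct argument here; the benefit is a conceptually transparent link between the two lemmas. Your strict-convexity step is essentially identical to the paper's.
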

\begin{proof}
A simple proof is given in the Appendix.
\end{proof}


Let $f$ and $g_1, \ldots, g_m \in \mathbb{R}[\underline{x}]$ be convex polynomials with $K := \{x\in \mathbb{R}^n \ | \  g_i(x) \le 0, i = 1, \ldots, m\} \ne \emptyset.$
Suppose that ${\rm argmin}_{K}f \neq \emptyset$ and that there exists $x^* \in {\rm argmin}_Kf$. Then, convex programming duality \cite{Hiriart1993,Jeya_book,jeya1,jgd} shows that if there exists $x^0 \in {\Bbb R}^n$ such that $g_i(x^0) < 0$ for $i = 1, \ldots, m$, then there exists $\lambda^*\in \mathbb{R}_+^m$ such that $(x^*, \lambda^*)$ is {\em a saddle-point of the Lagrangian function} $L(x,\lambda) := f(x) + \sum_{i = 1}^m\lambda_ig_i(x)$ in the sense that
for each $x \in \mathbb{R}^n$ and for each $\lambda \in \mathbb{R}_+^m$,
$$L(x, \lambda^*)\ge L(x^*, \lambda^*)\ge L(x^*, \lambda).$$

%

%

\begin{theorem} [{\bf Representation of Convex Polynomials}] \label{TheoremRepresentation41}
Let $f$ and $g_1, \ldots, g_m \in \mathbb{R}[\underline{x}]$ be convex polynomials
with $K := \{x\in \mathbb{R}^n \ | \  g_i(x) \le 0, i = 1, \ldots, m\} \ne \emptyset.$
Let $L \colon \mathbb{R}^n \times \mathbb{R}^m_+ \rightarrow \mathbb{R}$ be the Lagrangian function
defined by $L(x,\lambda) := f(x) + \sum_{i = 1}^m\lambda_ig_i(x)$. If the Lagrangian function $L$ has a
saddle-point $(x^*, \lambda^*)\in K\times \mathbb{R}^m_+$ with $\nabla^2_{xx} L(x^*,\lambda^*) \succ 0$,
then, for any $c\in {\Bbb R}$ with $c > f(x^*)$, we have $f - f(x^*) \in \mathbf{M}(-g_1, \ldots, -g_m, c - f).$
\end{theorem}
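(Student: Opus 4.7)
The plan is to reduce the representation claim to producing an exact sum-of-squares expression for the shifted Lagrangian $h := L(\cdot,\lambda^*) - f(x^*)$ modulo $(c-f)$, and to construct such an expression via Putinar's Positivstellensatz on a cleverly chosen compact set together with a local correction coming from the positive definite Hessian at $x^*$.

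First, from the saddle-point condition I extract the inequality $L(x,\lambda^*) \geq L(x^*,\lambda^*)$ for all $x\in\mathbb{R}^n$, and the complementary slackness $\sum_i \lambda_i^* g_i(x^*) = 0$ (obtained from $L(x^*,\lambda)\le L(x^*,\lambda^*)$ for all $\lambda\in\mathbb{R}_+^m$, which forces $\lambda_i^* g_i(x^*)=0$ because $g_i(x^*)\le 0$ and $\lambda_i^*\ge 0$). These give $h\ge 0$ on $\mathbb{R}^n$, $h(x^*)=0$, and by hypothesis $\nabla^2 h(x^*) = \nabla^2_{xx} L(x^*,\lambda^*)\succ 0$. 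The key rearrangement
\begin{equation*}
f(x) - f(x^*) \;=\; h(x) - \sum_{i=1}^m \lambda_i^* g_i(x)
\end{equation*}
reduces the theorem to showing $h = \sigma_0 + \sigma(c-f)$ for some $\sigma_0,\sigma\in\Sigma^2$: taking the constant SOS multipliers $\sigma_i:=\lambda_i^*$ on the $-g_i$ then places $f - f(x^*)$ in $\mathbf{M}(-g_1,\ldots,-g_m,c-f)$.

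Second, I invoke Lemma 3.1: from $\nabla^2 h(x^*)\succ 0$ the convex polynomial $h$ is strictly convex and coercive on $\mathbb{R}^n$, so $x^*$ is its unique zero and its sublevel sets are compact. In particular the polynomial
\begin{equation*}
p \;:=\; (c-f) - \sum_{i=1}^m \lambda_i^* g_i \;=\; c - L(\cdot,\lambda^*) \;=\; (c-f(x^*)) - h
\end{equation*}
already lies in $\mathbf{M}(-g_1,\ldots,-g_m,c-f)$ by construction (with $\sigma=1$, $\sigma_i=\lambda_i^*$, $\sigma_0=0$), and $\{p\ge 0\} = \{h\le c - f(x^*)\}$ is nonempty compact because $c>f(x^*)$ and $h$ is coercive. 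Thus the extended quadratic module is Archimedean and Putinar's Positivstellensatz (Lemma 2.1) becomes applicable on the compact set $\{p\ge 0\}$.

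Third and hardest, I construct the exact SOS expression for $h$. Putinar applied to $h+\varepsilon>0$ on the compact set $\{p\ge 0\}$ delivers, for each $\varepsilon>0$, an approximate representation of $h$ in the module. To pass to $\varepsilon=0$, I would exploit $\nabla^2 h(x^*)\succ 0$ by peeling off the explicit SOS quadratic $q(x) := \tfrac{1}{2}(x-x^*)^\top \nabla^2 h(x^*)(x-x^*)$: the residual $h-q$ vanishes to order at least three at $x^*$, and the strict positive definiteness of $\nabla^2 h(x^*)$ provides enough coercive room to absorb this residual into a Putinar-style decomposition with SOS degrees bounded uniformly in $\varepsilon$, yielding the desired exact equality $h = \sigma_0 + \sigma(c-f)$. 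The main obstacle is precisely this last step: upgrading Putinar's approximate representation to an exact one at the zero $x^*$ of $h$, where strict positivity fails. The positive definite Hessian condition on the Lagrangian is exactly the ingredient that enables this upgrade, either through a uniform degree bound on the SOS multipliers as $\varepsilon\to 0$ or through the local quadratic correction sketched above, and its careful deployment is the technical heart of the theorem.
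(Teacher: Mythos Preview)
Your setup is correct and matches the paper's approach exactly: you define $h(x)=L(x,\lambda^*)-f(x^*)$, observe $h\ge 0$, $h(x^*)=0$, $\nabla^2 h(x^*)\succ 0$, invoke Lemma~3.1 to get strict convexity and coercivity, and note that the compact sublevel set $\{h\le c-f(x^*)\}=\{p\ge 0\}$ makes the module Archimedean. The rearrangement $f-f(x^*)=h-\sum_i\lambda_i^* g_i$ and the reduction to representing $h$ itself are also exactly as in the paper.

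The genuine gap is your third step. You correctly identify that Putinar's Positivstellensatz only gives $h+\varepsilon\in\mathbf{M}(p)$ for each $\varepsilon>0$, and that passing to $\varepsilon=0$ is the whole difficulty. But neither of your two proposed routes actually closes this gap. A uniform degree bound on the Putinar multipliers as $\varepsilon\to 0$ is precisely what is \emph{not} guaranteed by Putinar's theorem; establishing such a bound in this situation is essentially equivalent to proving the boundary result you need. The ``local quadratic correction'' idea also fails as stated: subtracting $q(x)=\tfrac12(x-x^*)^\top\nabla^2 h(x^*)(x-x^*)$ produces a remainder $h-q$ that vanishes to order three at $x^*$ but is typically \emph{negative} in a punctured neighborhood of $x^*$ (think of $h(t)=t^2+t^3$ in one variable), so you cannot feed it back into Putinar. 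Making a quadratic-peeling argument work requires a much more delicate local-to-global patching.

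The paper does not attempt to carry out either of these arguments. Instead it invokes a result of Scheiderer (\cite[Corollary~3.6]{Scheiderer2005}, see also \cite[Example~3.18]{Scheiderer2003}) which says, roughly, that if $h\ge 0$ on a compact set $\{p\ge 0\}$ generating an Archimedean module, and $h$ has only isolated zeros at each of which it lies in the localized quadratic module (guaranteed here by $\nabla^2 h(x^*)\succ 0$ at the unique interior zero $x^*$), then $h\in\mathbf{M}(p)$ exactly. This yields $h=\sigma_0+\sigma_1\bigl(c-f(x^*)-h\bigr)$, and expanding $c-f(x^*)-h=(c-f)-\sum_i\lambda_i^* g_i$ gives
\[
f-f(x^*)=\sigma_0-\sum_{i=1}^m(\lambda_i^*+\lambda_i^*\sigma_1)g_i+\sigma_1(c-f)\in\mathbf{M}(-g_1,\ldots,-g_m,c-f).
\]
So the missing ingredient in your proposal is precisely Scheiderer's boundary Positivstellensatz; what you call ``the technical heart of the theorem'' is outsourced to that reference rather than proved from Putinar alone.
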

\begin{proof}
Since $(x^*, \lambda^*)$ is a saddle-point of the Lagrangian function $L$ and $x^*\in K$, it follows that,
for each $x\in \mathbb{R}^n$, $L(x, \lambda^*)\ge L(x^*, \lambda^*)= f(x^*)$ and $x^*$ is a minimizer of
$f$ over $K$. Let
$$h(x) := L(x, \lambda^*) - f(x^*) = f(x) - f(x^*)  + \sum_{i=1}^m \lambda^*_i g_i(x),
\qquad \forall x \in {\Bbb R}^n.$$
Clearly $h$ is a convex polynomial and $h(x) \ge 0$, for all $x\in \mathbb{R}^n$. Moreover, it is easy to check that $h(x^*) = 0 = \inf_{x \in {\Bbb R}^n}h(x);$
in particular, $\nabla h(x^*) = 0.$ By a direct calculation, the Hessian $\nabla^2 h$ of $h$ at $x^*$ is
positive definite. We deduce from Lemma~ \ref{HessianLemma} that the polynomial $h$ is strictly convex and coercive, which implies that $x^*$ is the unique minimizer of $h$ on ${\Bbb R}^n$ and that
$$S : =  \{x \in {\Bbb R}^n \ | \ h(x) \le c-f(x^*)  \}$$
is a nonempty compact set.

We may now apply \cite[Corollary 3.6]{Scheiderer2005} (see also \cite[Example 3.18]{Scheiderer2003}) to conclude that there exist sum-of-squares polynomials $\sigma_0,\sigma_1 \in \Sigma^2$ such that, for each $x\in \mathbb{R}^n$,
\begin{eqnarray*}
h(x) &=&  \sigma_0(x) + \sigma_1(x) (c - f(x^*)-h(x)).
\end{eqnarray*}
This reduces to, for each $x\in \mathbb{R}^n$,
\begin{eqnarray*}
f(x) - f(x^*) & = &  \sigma_0 - \sum_{i=1}^m (\lambda_i^* + \lambda_i^* \sigma_1) g_i(x) + \sigma_1(c - f(x)).
\end{eqnarray*}
Then the conclusion follows.
\end{proof}

\begin{example} [{\bf Importance of positive definite Hessian of $L$ at a saddle-point for representation}] {\rm
Let $p \in {\Bbb R}[x]$ be a convex form (i.e., homogeneous polynomial) on $\mathbb{R}^n$  of degree at least $4$ which is not a sum-of-squares polynomial. See \cite{Blekherman2006} for the existence of such polynomials.

Let $f,g$ be convex polynomials on $\mathbb{R}^{n} \times \mathbb{R}$ defined by $f(x,y) := p(x)$ and
$g(x, y) := y^2 - 1.$ Then, $f$ is not strictly convex. Let $f^* := \min_{x \in K} f(x,y),$ where $K:=\{(x,y) \in \mathbb{R}^{n} \times \mathbb{R} \ | \ g(x,y) \le 0\} = \mathbb{R}^n \times [-1, 1].$

Then $f^* = 0$ because $f(0, 1) = 0$, $\nabla f(0, 1) = 0$ and $f$ is convex.
 Consider the corresponding Lagrangian $L \colon \mathbb{R}^{n+1}\times \mathbb{R}_+ \rightarrow \mathbb{R}$
defined by $L(x,y,\lambda) := f(x,y)+\lambda g(x,y).$ Clearly, $(x^*,y^*,\lambda^*):=(0, 1, 0)$ is a
saddle point of $L$ as $L(x^*,y^*,\lambda)=L(x^*,y^*,\lambda^*) = 0 \le f(x,y) = L(x,y,\lambda^*)$ for all $x \in \mathbb{R}^n$ and $\lambda \in \mathbb{R}_+$. Moreover, as $\nabla^2p(x^*)=\nabla^2p(0)=0$, the Hessian of the Lagrangian function $L$ is not positive definite at the point $(x^*,y^*,\lambda^*).$

We now show that the representation of Theorem \ref{TheoremRepresentation41} fails. To see this, note that the
quadratic module $\mathbf{M}(1 - \|x\|^2) \subset {\Bbb R}[x]$ is Archimedean. So, there exists $c > f(x^*, y^*)=0$
such that  $c - p \in  \mathbf{M}(1 - \|x\|^2).$

On the contrary, suppose that the representation of Theorem \ref{TheoremRepresentation41} holds. Then,
$$f(x,y) = \sigma_0(x,y) -\sigma_1(x,y) g(x,y) + \sigma(x,y) (c - f(x,y)) \mbox{ for all } (x,y) \in \mathbb{R}^{n}\times \mathbb{R},$$
for some sum-of-squares polynomials $\sigma,\sigma_0, \sigma_1$ in the ring ${\Bbb R}[x, y].$ Letting $y = 1$ and noting that $g(x, 1) = 0$, we see that, for all $x \in \mathbb{R}^n$
\begin{eqnarray*}
p(x) = f(x, 1) & = & \sigma_0(x, 1) + \sigma(x, 1) (c - f(x, 1)) \\
&= & \sigma_0(x, 1) + \sigma(x, 1) (c - p(x)).
\end{eqnarray*}
So, $p \in \mathbf{M}(c - p) \subset {\Bbb R}[x].$  Then we have $p \in \mathbf{M}(1 - \|x\|^2).$
By Proposition 4 in De Klerk, Laurent, and Parrilo \cite{Klerk2005}, a form belongs to the quadratic module $\mathbf{M}(1 - \|x\|^2)$ if and only if it is a sum-of-squares polynomial. This contradicts our assumption that the polynomial $p$ is not a sum-of-squares. Thus, the representation fails in this case.
}\end{example}

As an easy application of Theorem 2.1, we obtain the
following representation under the Archimedean assumption. For related results, see
\cite[Theorem 3.4]{Lasserre2009-1} and \cite[Corollary 3.3]{Klerk2011}.

\begin{corollary}[{\bf Representation with Archimedean Condition}] \label{KlerkCorollary}
Let $f, g_1, \ldots, g_m \in \mathbb{R}[\underline{x}]$ be convex polynomials,  and let $K := \{x\in \mathbb{R}^n \ | \  g_i(x) \le 0, i = 1, \ldots, m\} \ne \emptyset.$ Suppose that the following assumptions hold:
\begin{enumerate}
\item[{\rm (i)}] There exists $x^0 \in {\Bbb R}^n \textrm{ such that } g_i(x^0) < 0 \textrm{ for } i = 1, \ldots, m.$
\item[{\rm (ii)}] $\nabla^2_{xx} L(x^*, \lambda^*)\succ 0 $ at a saddle-point $(x^*,\lambda^*) \in \mathbb{R}^n \times \mathbb{R}^m_+$
of the Lagrange function $L$.
\item[{\rm (iii)}] The quadratic module $\mathbf{M}(-g_1 , \ldots, -g_m)$ is Archimedean.
\end{enumerate}
Then, $f - f(x^*) \in \mathbf{M}(-g_1 , \ldots, -g_m).$
\end{corollary}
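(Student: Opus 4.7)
The plan is to deduce the corollary directly from Theorem~\ref{TheoremRepresentation41} by using the Archimedean assumption~(iii) to absorb the $c - f$ term back into $\mathbf{M}(-g_1, \ldots, -g_m)$. Hypothesis~(ii) supplies exactly the saddle-point with positive definite Hessian that Theorem~\ref{TheoremRepresentation41} requires, while hypothesis~(i) is consistent with (and a standard sufficient condition for) the existence of such a saddle-point via convex programming duality. Hence, for any $c > f(x^*)$, Theorem~\ref{TheoremRepresentation41} yields sum-of-squares polynomials $\sigma_0, \sigma_1 \in \Sigma^2$ with
$$f - f(x^*) = \sigma_0 - \sum_{i=1}^m \lambda_i^*(1 + \sigma_1) g_i + \sigma_1 (c - f).$$

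The next step is to invoke the standard characterization of Archimedean quadratic modules: $\mathbf{M}(-g_1, \ldots, -g_m)$ is Archimedean if and only if for every polynomial $q \in \mathbb{R}[\underline{x}]$ there exists $N \in \mathbb{N}$ with $N - q \in \mathbf{M}(-g_1, \ldots, -g_m)$. Applying this to $q = f$ and choosing $c$ larger than both $f(x^*)$ and the resulting $N$, I obtain sum-of-squares polynomials $\tau_0, \tau_1, \ldots, \tau_m$ such that
$$c - f = \tau_0 - \sum_{i=1}^m \tau_i g_i.$$

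Finally, substituting this expression for $c - f$ into the representation above and using the fact that the product of two sum-of-squares polynomials is again sum of squares, I arrive at
$$f - f(x^*) = (\sigma_0 + \sigma_1 \tau_0) - \sum_{i=1}^m \bigl( \lambda_i^*(1 + \sigma_1) + \sigma_1 \tau_i \bigr) g_i,$$
which exhibits $f - f(x^*)$ as an element of $\mathbf{M}(-g_1, \ldots, -g_m)$, as required. The argument is essentially a one-line substitution once Theorem~\ref{TheoremRepresentation41} and the upper-bound characterization of Archimedean modules are both in hand; the only point that really warrants care is the equivalence used in the second step, which is a well-documented fact from the theory of positive polynomials that I would cite rather than re-prove.
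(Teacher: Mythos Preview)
Your proposal is correct and follows essentially the same route as the paper: apply Theorem~\ref{TheoremRepresentation41} to land in $\mathbf{M}(-g_1,\ldots,-g_m,c-f)$, then use the Archimedean hypothesis to choose $c$ large enough that $c-f\in\mathbf{M}(-g_1,\ldots,-g_m)$, whence the extended module collapses to $\mathbf{M}(-g_1,\ldots,-g_m)$. The only cosmetic difference is that you spell out the substitution explicitly, whereas the paper simply writes $\mathbf{M}(-g_1,\ldots,-g_m,c-f)=\mathbf{M}(-g_1,\ldots,-g_m)$; and you correctly observe that hypothesis~(ii) already hands you the saddle-point needed for Theorem~\ref{TheoremRepresentation41}, so (i) plays no independent role in the argument.
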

\begin{proof}
The assumption (iii) implies that the set $K$ is compact, and so $\mathrm{argmin}_{x \in K} f(x)\ne \emptyset.$ The assumption (i) guarantees that there exists $\lambda^* \in {\Bbb R}^m_+$ such that $(x^*, \lambda^*)$ is a saddle-point of the Lagrangian function $L.$ Let $c \in {\Bbb N}$ be an arbitrary natural number satisfying $c > f(x^*).$ Thanks to Theorem 2.1, we get $f- f(x^*) \in \mathbf{M}( -g_1 , \ldots, -g_m,c-f).$

On the other hand, by taking $c$ large enough, if necessary, from the assumption~ (iii) we may assume that $c - f \in \mathbf{M}(-g_1 , \ldots, -g_m).$ Therefore $f - f(x^*) \in \mathbf{M}( -g_1 , \ldots, -g_m,c-f) = \mathbf{M}(-g_1 , \ldots, -g_m),$ which completes the proof.
\end{proof}

\begin{remark}[{\bf Comparisons with known recent results}]  {\rm
In the special case where the Hessian $\nabla^2f$ of the objective function $f$  is positive definite
at a minimizer $x^* \in {\rm argmin}_Kf$, then the Slater condition ensures that there exists $\lambda^* \in \mathbb{R}^m_+$ such that $(x^*, \lambda^*)$
is a saddle-point of the Lagrangian function $L$, and so, the Hessian $\nabla^2_{xx} L$ of
$L$ is positive definite at $(x^*,\lambda^*)$. Hence, it is easy to see that the above corollary
extends the representation results for convex polynomial optimization established in \cite[Theorem 3.4]{Lasserre2009-1} and \cite[Corollary 3.3]{Klerk2011}.
}\end{remark}

The following simple one dimensional example
illustrates that our representation result can be applied to the case where the Hessian $\nabla^2f$ of the
objective function $f$  is not positive definite at a minimizer.
\begin{example} {\bf (Verifying representation: Non-positive definiteness case of the Hessian $\nabla^2f$)}
{\rm Let $f(x)=x$ and $g(x)=x^2-1$. Then, $K:=\{x \in \mathbb{R} \ | \ g(x) \le 0\}=[-1,1]$. Clearly, ${\rm argmin}_Kf=\{-1\}$ and $f$ is not positive definite at the unique
minimizer $x^*:=-1$. On the other hand, direct verification shows that
$(x^*,\lambda^*):=(-1,\frac{1}{2})$ is a saddle point of the Lagrangian function
$L(x,\lambda) := f(x)+\lambda g(x)=x+\lambda (x^2-1)$, and
$\nabla^2_{xx} L(x^*,\lambda^*) \succ 0.$ Moreover, Slater condition is satisfied and the quadratic module
$\mathbf{M}(-g)$ is Archimedean. So, it follows from the previous corollary that
$f-f(x^*)=f+1 \in \mathbf{M}(-g)$. Indeed,  $f-f(x^*) = x+1 = \frac{1}{2}(x + 1)^2 + \frac{1}{2}(1-x^2) \in \mathbf{M}(-g)$.
}\end{example}


As we see in the following theorem, under the Slater condition and the positive definiteness of the
Hessian of $f$ at a minimizer, we obtain a sharper representation than the one in Theorem \ref{TheoremRepresentation41}.

\begin{theorem} [{\bf Sharp Representation with positive definite $\nabla^2f(x^*)$}] \label{CorollaryRepresentation4}
Let $f$ and $g_1, \ldots, g_m \in \mathbb{R}[\underline{x}]$ be convex polynomials with
$K := \{x\in \mathbb{R}^n \ | \  g_i(x) \le 0, i = 1, \ldots, m\} \ne \emptyset.$ Let
$\mathrm{argmin}_{x \in K} f(x)\ne \emptyset$ and $x^*\in \mathrm{argmin}_{x \in K} f(x)$. If there
exists $x^0 \in {\Bbb R}^n \textrm{ such that } g_i(x^0) < 0,
\textrm{ for } i = 1, \ldots, m$ and if $\nabla^2 f(x^*)\succ 0 $ then, for any $c > f(x^*)$, there exist sum-of-squares polynomials $\sigma_0,\sigma_1 \in \Sigma^2$ and Lagrange multipliers $\lambda_i^* \ge 0$, $i=1,2,\ldots, m$
such that $$f - f(x^*)  =   \sigma_0 - \sum_{i=1}^m \lambda_i^* g_i + \sigma_1(c - f).$$
\end{theorem}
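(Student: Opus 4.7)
The plan is to exploit the stronger hypothesis $\nabla^2 f(x^*)\succ 0$ in order to dispense with the sum-of-squares multipliers in front of the $g_i$'s that appear in Theorem~\ref{TheoremRepresentation41}, and replace them by the Lagrange multipliers themselves. The key new ingredient is that $\nabla^2 f(x^*)\succ 0$ now makes $f$ itself---not just a shifted Lagrangian---strictly convex and coercive, so that a \emph{single} generator $c-f$ already produces an Archimedean quadratic module.

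First I would use the Slater condition together with $x^*\in \mathrm{argmin}_K f$ and convex programming duality to extract $\lambda^*\in\mathbb{R}^m_+$ such that $(x^*,\lambda^*)$ is a saddle-point of $L(x,\lambda)=f(x)+\sum_i\lambda_i g_i(x)$. Set
\[
h(x) \; := \; f(x)-f(x^*)+\sum_{i=1}^m\lambda_i^*\,g_i(x).
\]
The saddle-point inequality $L(x,\lambda^*)\ge L(x^*,\lambda^*)=f(x^*)$ gives $h\ge 0$ on $\mathbb{R}^n$, while complementary slackness and stationarity give $h(x^*)=0$ and $\nabla h(x^*)=0$. From $\nabla^2 f(x^*)\succ 0$ and $\lambda_i^*\nabla^2 g_i(x^*)\succeq 0$ one obtains $\nabla^2 h(x^*)\succ 0$. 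Applying Lemma~\ref{HessianLemma} to both $f$ and $h$ shows that both polynomials are strictly convex and coercive; in particular $x^*$ is the unique zero of $h$ on $\mathbb{R}^n$, and, for any $c>f(x^*)$, the sublevel set $S:=\{x\in\mathbb{R}^n \mid c-f(x)\ge 0\}$ is nonempty, compact, and contains $x^*$ in its interior.

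Next I would invoke \cite[Corollary 3.6]{Scheiderer2005} (already used in the proof of Theorem~\ref{TheoremRepresentation41}), but now applied to the Archimedean single-constraint quadratic module $\mathbf{M}(c-f)$: the polynomial $h$ is non-negative on the compact set $S$, vanishes only at the interior point $x^*$, and has positive definite Hessian there. The cited corollary therefore supplies $\sigma_0,\sigma_1\in\Sigma^2$ with $h=\sigma_0+\sigma_1(c-f)$ on $\mathbb{R}^n$. Substituting the definition of $h$ and rearranging gives the target identity
\[
f-f(x^*)\;=\;\sigma_0-\sum_{i=1}^m\lambda_i^*\,g_i+\sigma_1(c-f),
\]
with non-negative \emph{constants} $\lambda_i^*$ multiplying the $g_i$'s.

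The main obstacle is the Scheiderer step: one must justify that the single generator $c-f$ (rather than the ``self-referential'' generator $c-f(x^*)-h$ used in Theorem~\ref{TheoremRepresentation41}) suffices. This is exactly what the strengthened hypothesis $\nabla^2 f(x^*)\succ 0$ buys us: coercivity of $f$ alone---via Lemma~\ref{HessianLemma}---makes $\mathbf{M}(c-f)$ Archimedean, so no SOS multiplier needs to dress up the constraint polynomials in the final certificate. Once this structural observation is secured, the remaining verifications---saddle-point existence, non-negativity of $h$, positive definiteness of $\nabla^2 h(x^*)$, and the final algebraic rearrangement---are routine.
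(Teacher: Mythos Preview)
Your proposal is correct and follows essentially the same route as the paper's own proof: obtain the saddle-point via Slater and convex duality, set $h:=L(\cdot,\lambda^*)-f(x^*)\ge 0$, use $\nabla^2 f(x^*)\succ 0$ with Lemma~\ref{HessianLemma} to make $f$ itself coercive so that $\{x:f(x)\le c\}$ is compact, then apply Scheiderer's \cite[Corollary~3.6]{Scheiderer2005} with the single generator $c-f$ to get $h=\sigma_0+\sigma_1(c-f)$ and rearrange. Your explicit verification that $\nabla^2 h(x^*)\succ 0$ and that $x^*$ is the unique interior zero of $h$ in $S$ is exactly the boundary-point check Scheiderer's corollary requires; the paper's proof of Theorem~\ref{CorollaryRepresentation4} states only ``$h\ge 0$ on $\bar S$'' and invokes the corollary, tacitly relying on the same checks already spelled out in the proof of Theorem~\ref{TheoremRepresentation41}.
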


\begin{proof} The Slater condition and convex programming duality guarantee that there exists $\lambda^*\in \mathbb{R}_+^m$ such that
$(x^*, \lambda^*)$ is a saddle-point of the Lagrangian function
$L(x,\lambda) := f(x) + \sum_{i = 1}^m\lambda_ig_i(x)$. So,
for each $x\in \mathbb{R}^n$, $L(x, \lambda^*)\ge L(x^*, \lambda^*)= f(x^*)$. Then,
$$h(x) := L(x, \lambda^*) - f(x^*) = f(x) - f(x^*)  + \sum_{i=1}^m \lambda^*_i g_i(x) \ge 0,
\qquad \forall x \in {\Bbb R}^n.$$

Now, by the assumption, $\nabla^2 f(x^*)\succ 0 $ and so, Lemma~ \ref{HessianLemma} shows that
$f$ is a strictly convex and coercive polynomial. Then, the convex set
$\bar{S} : =  \{x \in {\Bbb R}^n \ | \ f(x) \le c  \}$
is nonempty and compact. Since $h\ge 0$  on $\bar{S}$,  \cite[Corollary 3.6]{Scheiderer2005}
(see also \cite[Example 3.18]{Scheiderer2003}) gives us that
there exist sum-of-squares polynomials $\sigma_0,\sigma_1 \in \Sigma^2$ such that, for each $x\in \mathbb{R}^n$,
\begin{eqnarray*}
h(x) &=&  \sigma_0(x) + \sigma_1(x) (c -f(x)).
\end{eqnarray*}
This reduces to, for each $x\in \mathbb{R}^n$,
\begin{eqnarray*}
f(x) - f(x^*) & = &  \sigma_0 - \sum_{i=1}^m \lambda_i^* g_i(x) + \sigma_1(c - f(x)).
\end{eqnarray*}
Then the conclusion follows.
\end{proof}	

\begin{remark}[{\bf Constraint qualifications}]{\rm
In Corollary 2.1 and Theorem 2.2, we have used the Slater condition for guaranteeing the existence of a saddle-point.
For other general constraint qualifications
ensuring the existence of a saddle point of the Lagrangian function, see \cite{jeya1,jgd}.
}\end{remark}

%
We now show that the Lasserre hierarchy of SDP relaxations of Problem (\ref{OptimizationProblem}) has {\em finite convergence} which means that $f^*_k = f^*$ for some integer $k$ and Problem (\ref{SOSRelaxationProblem1})  achieves its optimal value $f^*_k$.

\begin{theorem}[{\bf Finite Convergence}] \label{FiniteConvergenceTheorem}
For Problem~{\rm (\ref{OptimizationProblem})}, let $L\colon \mathbb{R}^n \times \mathbb{R}^m_+ \rightarrow \mathbb{R}$ be the Lagrangian function defined by
$L(x,\lambda) =f(x)+\sum_{i = 1}^m\lambda_ig_i(x)$. Assume that the Lagrangian function $L$ has a saddle-point $(x^*, \lambda^*)\in K\times \mathbb{R}^m_+$ with $\nabla^2_{xx} L(x^*,\lambda^*) \succ 0$.
Then there exists an integer $k$ such that $f^*_k = f^*$ and Problem {\rm (\ref{SOSRelaxationProblem1})}  achieves its optimal value.
\end{theorem}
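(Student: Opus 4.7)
The plan is to reduce the finite convergence claim directly to the representation Theorem~\ref{TheoremRepresentation41}. Since $(x^*,\lambda^*)$ is a saddle-point of $L$, the inequality $f(x) \ge L(x,\lambda^*) \ge L(x^*,\lambda^*) = f(x^*)$ holds for every $x \in K$ (after dropping the nonpositive terms $\lambda_i^* g_i(x)$), so $x^*$ is a minimizer of $f$ on $K$ and $f(x^*) = f^*$. This lets me replace $f(x^*)$ by $f^*$ throughout.

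Next, I would pick the constant $c \in \mathbb{R}$ that appears in the definition of $\mathbf{M}_k$; by construction $c > f(x^0)$ for some $x^0 \in K$, and since $x^* \in K$ is a minimizer, $c > f(x^0) \ge f(x^*)$. Therefore the hypotheses of Theorem~\ref{TheoremRepresentation41} are met for this particular $c$, and I conclude
\[
f - f^* \;=\; \sigma_0 \,-\, \sum_{i=1}^m \sigma_i\, g_i \,+\, \sigma\,(c - f)
\]
for some sum-of-squares polynomials $\sigma_0, \sigma_1, \ldots, \sigma_m, \sigma$. This representation has a \emph{fixed} degree: letting
\[
2k \;:=\; \max\bigl\{\deg\sigma_0,\; \max_{1\le i \le m}\deg(\sigma_i g_i),\; \deg(\sigma(c-f))\bigr\}
\]
(rounded up to an even integer if necessary), we obtain $f - f^* \in \mathbf{M}_k$.

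From the definition of $f_k^*$ it follows immediately that $\mu = f^*$ is feasible for the supremum defining $f_k^*$, hence $f_k^* \ge f^*$. Combined with the standing inequality $f_k^* \le f^*$ recorded after~(\ref{SOSRelaxationProblem1}), we deduce $f_k^* = f^*$. Moreover the supremum is attained, precisely by $\mu = f^*$, so Problem~(\ref{SOSRelaxationProblem1}) achieves its optimal value.

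The only delicate points are purely bookkeeping: I must verify that $x^*$ is indeed a global minimizer of $f$ on $K$ (which is the standard consequence of the saddle-point inequality) and that the constant $c$ fixed in the definition of $\mathbf{M}_k$ is admissible in Theorem~\ref{TheoremRepresentation41}, which only requires $c > f(x^*)$. There is no analytic obstacle here because Theorem~\ref{TheoremRepresentation41}, powered by the Hessian condition through Lemma~\ref{HessianLemma} and Scheiderer's representation, already delivers the decomposition with \emph{concrete} SOS polynomials of bounded degree; the remaining work is simply to choose the truncation level $k$ to accommodate their degrees.
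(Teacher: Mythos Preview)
Your proposal is correct and follows essentially the same route as the paper's proof: both invoke Theorem~\ref{TheoremRepresentation41} to obtain a representation $f - f^* = \sigma_0 - \sum_i \sigma_i g_i + \sigma(c-f)$, then read off the truncation level $k$ from the degrees and combine with the standing bound $f_k^* \le f^*$. If anything, your argument is slightly more careful than the paper's, since you explicitly justify why the fixed constant $c$ used to define $\mathbf{M}_k$ satisfies $c > f(x^*)$ (via $c > f(x^0) \ge f(x^*)$), a point the paper leaves implicit.
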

\begin{proof}
We know that $f^*_{k} \le f^*$ for all $k \ge 1.$ On the other hand, it follows from Theorem~ \ref{TheoremRepresentation41} that there exist sum-of-squares polynomials
 $\sigma,\sigma_0, \sigma_1,\ldots,\sigma_m \in \Sigma^2$ such that
\begin{eqnarray*}
f  - f^* &=& \sigma_0  - \sigma_1 g_1 - \cdots - \sigma_m g_m+\sigma (c - f).
\end{eqnarray*}
Hence $f^*_k = f^*$ for some $k \in {\Bbb N}.$ As $(x^*, \lambda^*)\in K \times \mathbb{R}^m_+$ is a saddle-point of $L$, $x^*$ is  minimizer of Problem (\ref{OptimizationProblem}) and $f^*=f(x^*)$ which is also a solution of Problem (\ref{SOSRelaxationProblem1}).
\end{proof}

\begin{remark}{\rm
 It is worth noting that in the case, where $\nabla^2 f(x^*)$ is positive definite at a minimizer $x^*$ of
Problem~{\rm (\ref{OptimizationProblem})}, using Theorem 2.2, one can establish finite convergence of a sharper form of approximation Problem
{\rm (\ref{SOSRelaxationProblem1})}, where the Lagrange multipliers, $\lambda_i^*$, $i=1,2,\ldots, m$,
associated with the minimizer $x^*$, are replaced by
$\sigma_i$, $i=1,2,\ldots, m$, in $\mathbf{M}_k$}.
\end{remark}


The following example shows that the finite convergence in the preceding theorem may fail if the saddle-point condition does not hold at a minimizer.
\begin{example}[{\bf Importance of Saddle-point Condition for Finite Convergence}]{\rm  Consider the minimization problem
\begin{equation} \label{CounterExample1}
\min\{f(x,y) \ | \  g(x,y) \le 0 \ \},
\end{equation}
where $f(x,y)= x^2+y^2+x+y$, $g(x,y) = x^2+y^2$
and $K :=\{(x,y) \in \mathbb{R}^2 \ | \ g(x,y) \le 0\}.$

Clearly, the unique minimizer of (\ref{CounterExample1}) is $(x^*,y^*)=(0,0)$, $f^*:=f(x^*,y^*)=0$ and $\nabla^2 f(x^*,y^*) ={\rm diag}(2,2) \succ 0$. It is easy to check that the saddle-point condition is not satisfied at $(x^*,y^*)=(0,0)$.

Now, let $c$ be a real number such that $c > f^*=0$. For each $k$, the $k$th-order relaxation problem of (\ref{CounterExample1}) is
\[
\sup\{\mu \in {\Bbb R} \ | \ f - \mu \in \mathbf{M}_{k}  \},
\]
where $\mathbf{M}_k := \{\sigma_0 -\sigma_1 g+\sigma (c-f) \ | \  \sigma,\sigma_0,\sigma_1 \in \Sigma^2, \deg \sigma_0 \le 2k, \deg \sigma_1 g \le 2k, \deg \sigma (c-f) \le 2k\}$. We now show that the finite convergence fails. We establish this by the method of contradiction. Suppose that Problem (\ref{CounterExample1}) has finite convergence. Then, there exists $k_0 \in \mathbb{N}$, $\sigma,\sigma_0,\sigma_1 \in \Sigma^2$ with ${\rm deg} \, \sigma_0 \le 2k_0, {\rm deg} \, \sigma_1 g \le 2k_0$ and
${\rm deg} \, \sigma (c-f) \le 2k_0$
such that
$f=f - f^*=\sigma_0-\sigma_1 g+\sigma(c-f).$
This gives us, for each $(x,y) \in \mathbb{R}^2,$  that
\begin{equation} \label{eq:00}
\big(1+\sigma(x,y)+\sigma_1(x,y)\big) (x^2+y^2) + (1+\sigma(x,y))(x+y)=\sigma_0(x,y)+c \sigma (x,y) \ge 0.
\end{equation}
 Letting $(x,y)=(-\frac{1}{k},-\frac{1}{k})$ in (\ref{eq:00}), where $k \in \mathbb{N}$, yields
\[
\big(1+\sigma(-\frac{1}{k},-\frac{1}{k})+\sigma_1(-\frac{1}{k},
-\frac{1}{k})\big) \frac{2}{k^2} - (1+\sigma_1(-\frac{1}{k},-\frac{1}{k}))\frac{2}{k} \ge 0.
\]
Then,
\[
 1+\sigma(-\frac{1}{k},-\frac{1}{k})+\sigma_1(-\frac{1}{k},-\frac{1}{k}) \ge k(1+\sigma_1(-\frac{1}{k},-\frac{1}{k})) \ge k,
\]
which is impossible as the left hand side converges to $ 1+\sigma(0,0)+\sigma_1(0,0)$.
}\end{example}

The following theorem shows that the existence of saddle-point of the Lagrangian function of Problem~ (\ref{OptimizationProblem}) at each minimizer is indeed necessary for our finite convergence.

\begin{theorem}[{\bf Necessity of Saddle-point for Finite Convergence}]
For Problem~ {\rm (\ref{OptimizationProblem})}, let $L \colon \mathbb{R}^n \times \mathbb{R}^m_+ \rightarrow \mathbb{R}$ be the Lagrangian function defined by $L(x, \lambda) := f(x)+\sum_{i = 1}^m\lambda_ig_i(x).$
If the Lasserre hierarchy has finite convergence then, for every minimizer $x^*$ of
Problem {\rm (\ref{OptimizationProblem})}, there exists $\lambda^*\in \mathbb{R}_+^m$ such that
$(x^*, \lambda^*)$ is a saddle-point of $L$.
\end{theorem}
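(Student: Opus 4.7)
The plan is to unpack the definition of finite convergence into a concrete sum-of-squares certificate, evaluate that certificate and its derivative at an arbitrary minimizer $x^*$, extract KKT multipliers, and finally invoke convexity to upgrade the KKT pair to a saddle-point of $L$. Concretely, finite convergence supplies an integer $k$ and SOS polynomials $\sigma_0, \sigma_1, \ldots, \sigma_m, \sigma \in \Sigma^2$ (respecting the degree bounds defining $\mathbf{M}_k$) such that
$$f - f^* \;=\; \sigma_0 \;-\; \sum_{i=1}^m \sigma_i g_i \;+\; \sigma(c - f),$$
where $c > f^*$ automatically, since $c > f(x^0) \geq f^*$ for the feasible point $x^0$ fixed at the outset. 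Plugging in any minimizer $x^*$ produces a sum of four nonnegative terms equalling zero (using $\sigma_0, \sigma_i, \sigma \in \Sigma^2$, $g_i(x^*) \leq 0$, and $c - f^* > 0$); hence each term vanishes individually. This gives $\sigma_0(x^*) = 0$, $\sigma(x^*) = 0$, and the complementarity $\sigma_i(x^*) g_i(x^*) = 0$ for every $i$.

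Next I would differentiate the identity at $x^*$, using the elementary fact that a sum-of-squares polynomial attaining the value $0$ at a point is globally minimized there and hence has vanishing gradient. Thus $\nabla \sigma_0(x^*) = 0$, $\nabla \sigma(x^*) = 0$, and $\nabla \sigma_i(x^*) = 0$ whenever $g_i(x^*) < 0$. After these vanishing pieces are removed, the differentiated identity collapses to
$$\nabla f(x^*) \;=\; -\sum_{i \,:\, g_i(x^*) = 0} \sigma_i(x^*)\, \nabla g_i(x^*).$$
Set $\lambda_i^* := \sigma_i(x^*) \geq 0$ when $g_i(x^*) = 0$ and $\lambda_i^* := 0$ otherwise. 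Then $\lambda^* \in \mathbb{R}_+^m$, $\lambda_i^* g_i(x^*) = 0$ for every $i$, and $\nabla_x L(x^*, \lambda^*) = 0$, together with $x^* \in K$.

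Finally, these KKT conditions upgrade to a saddle-point by convexity alone. Since $L(\cdot, \lambda^*)$ is a convex polynomial in $x$ with vanishing gradient at $x^*$, $L(x, \lambda^*) \geq L(x^*, \lambda^*)$ for every $x \in \mathbb{R}^n$; and for every $\lambda \in \mathbb{R}_+^m$,
$$L(x^*, \lambda) - L(x^*, \lambda^*) \;=\; \sum_{i=1}^m \lambda_i g_i(x^*) \;\leq\; 0,$$
by complementary slackness together with $\lambda_i \geq 0$ and $g_i(x^*) \leq 0$. These are precisely the saddle-point inequalities for $(x^*, \lambda^*)$. I do not foresee a hard obstacle: the one substantive ingredient is the vanishing-gradient property of SOS polynomials at their zeros, and once the KKT identities are in hand the conclusion is immediate from the convexity of $L(\cdot,\lambda^*)$.
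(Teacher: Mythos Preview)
Your proposal is correct and follows essentially the same approach as the paper: extract the SOS certificate from finite convergence, evaluate and differentiate it at $x^*$ to obtain KKT conditions (using that an SOS polynomial vanishing at a point has zero gradient there), and then invoke convexity to upgrade to a saddle-point. The only cosmetic difference is that the paper first rewrites the identity as $(1+\sigma)(f-f^*)+\sum_i\sigma_i g_i=\overline{\sigma}_0$ with $\overline{\sigma}_0:=\sigma_0+\sigma(c-f^*)\in\Sigma^2$ and differentiates $\overline{\sigma}_0$, whereas you differentiate the original certificate directly and use $\nabla\sigma_0(x^*)=\nabla\sigma(x^*)=0$ separately; the content is identical.
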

\begin{proof}
Assume that the Lasserre hierarchy has finite convergence. Let $x^*\in K$ with $f^*:=f(x^*)= \min_{x \in K} f(x).$ Then,
\[
f  - f^* \ =  \sigma_0  - \sigma_1 g_1 - \cdots - \sigma_m g_m+ \sigma (c - f),
\]
where $\sigma,\sigma_0, \sigma_1,\ldots,\sigma_m \in \Sigma^2$ are sum-of-squares polynomials and $c > f(x^*)$. This gives us that
\[
(1+ \sigma)(f- f^*) =\sigma_0 - \sigma_1 g_1 - \cdots - \sigma_m g_m+ \sigma (c - f^*).
\]
Thus, for all $x \in \mathbb{R}^n$, $$
\big(1+ \sigma(x)\big)(f(x)- f^*) + \sum_{i=1}^m\sigma_i(x)g_i(x) = \overline{\sigma}_0(x),$$
where $\overline{\sigma}_0 := \sigma_0 + \sigma (c-f^*)\in \Sigma^2.$ Let $x=x^*$. Then, we have $\sum_{i=1}^m\sigma_i(x^*)g_i(x^*) = \overline{\sigma}_0(x^*) \ge 0$.
 This together with $\sigma_i \ge 0$ and $x^* \in K$ implies that
$$\overline{\sigma}_0(x^*) = 0 \quad \textrm{ and } \quad \sigma_i(x^*)g_i(x^*)=0, \ i=1,\ldots, m,$$
and hence $\sigma_0(x^*) = \sigma(x^*) = 0.$
 As $\overline{\sigma}_0(x) \ge 0$ and $\overline{\sigma}_0(x^*)=0,$ $x^*$ is a minimizer of $\overline{\sigma}_0$ and so,
\begin{eqnarray*}
 0 \ = \ \nabla \overline{\sigma}_0(x^*) & = & (1+\sigma (x^*))\nabla f(x^*) + \sum_{i=1}^m \sigma_i(x^*) \nabla g_i(x^*) + \sum_{i=1}^m \nabla \sigma_i(x^*)  g_i(x^*) \\
& = & \nabla f(x^*) + \sum_{i=1}^m \sigma_i(x^*) \nabla g_i(x^*) + \sum_{i=1}^m \nabla \sigma_i(x^*)  g_i(x^*).
\end{eqnarray*}
Since $\sigma_i(x^*)g_i(x^*)=0, \ i=1,\ldots, m$, it follows that if $g_i(x^*) < 0$ then $\sigma_i(x^*)=0$ and hence $\nabla \sigma_i(x^*)=0.$ Consequently, $\nabla \sigma_i(x^*)  g_i(x^*)=0$, $i=1,\ldots,m$.  So, we have
\[
\nabla f(x^*) + \sum_{i=1}^m \lambda_i^* \nabla g_i(x^*) =0,
\]
where $\lambda_i^* := \sigma_i(x^*)$, $i=1,\ldots,m$.
Hence, by convexity of $f+ \sum_{i=1}^m\lambda_i^*g_i,$ we get that, for each $x\in \mathbb{R}^n$,
$$f(x)+ \sum_{i=1}^m\lambda_i^*g_i(x) \ge f(x^*)+ \sum_{i=1}^m\lambda_i^*g_i(x^*).$$
It is now easy to check that $(x^*, \lambda^*)$ is a saddle-point of the Lagrangian function of Problem~(\ref{OptimizationProblem}).
\end{proof}

\begin{remark}
{\rm For related necessary conditions for finite convergence of Lasserre hierarchy for optimization problems, where feasible sets are
compact, see \cite{nie2012}}.
\end{remark}

\section{Appendix: Proofs of Coercivity \& Strict Convexity of Convex Polynomials }




\noindent{\bf Proof of Lemma 2.2}.
Let
$$E_h:=\{d \in \mathbb{R}^n \ | \ h(x+td)=h(x), \, \forall \, t \in \mathbb{R} \mbox{ and } \forall \, x \in \mathbb{R}^n\}.$$ Then, it is
easy to verify directly that $E_h$ is a subspace of $\mathbb{R}^n.$ Let $l := n - \dim E_h,$ and let
$e_1,\ldots,e_n \in \mathbb{R}^n$ be an orthonormal basis such that ${\rm span}\{e_{l+1},\ldots,e_n\}=E_h$ and ${\rm span}\{e_1,\ldots,e_l\}=E_h^{\bot}$, where $E_h^{\bot}$ is the orthogonal complement of $E_h$. Let $A := [e_1, \ldots,e_n]$. Then,  $A$ is an orthogonal matrix. Define $g \colon \mathbb{R}^l \rightarrow \mathbb{R}$ by $g(x_1,\ldots,x_l) := h\left(\sum_{i=1}^lx_ie_{i} \right).$ Then, $g$ is a convex polynomial and bounded below on ${\Bbb R}^l.$ Further, we have, for all $x \in \mathbb{R}^n,$
$$h(Ax) = h \left(\sum_{i=1}^n x_ie_i \right) = h \left(\sum_{i=1}^l x_ie_i+\sum_{i=l+1}^n x_ie_i \right) = h\left(\sum_{i=1}^l x_ie_i \right) = g(x_1,\ldots,x_l),$$
where the third equality follows by the fact that $\sum_{i=l+1}^n x_ie_i \in E_h.$

To verify that $g$ is indeed coercive, we assume, on the contrary, that $S:=\{x:g(x) \le \alpha\}$ is
unbounded for some $\alpha \in \mathbb{R}$. Let $\{a_k\} \subseteq S$ such that
$\|a_k\| \rightarrow +\infty$ as $k \rightarrow \infty$. Let $a \in \mathbb{R}^l$. Then, by passing to subsequence if necessary, we may
 assume that $\frac{a_k-a}{\|a_k-a\|} \rightarrow v \neq 0$. Let $t \ge 0.$
For sufficiently large $k$, we have $0 < \frac{t}{\| a_k - a \|} < 1,$ and so
\begin{eqnarray*}
g \left ( a + t \frac{a_k - a}{\| a_k - a \|} \right)
&=& g \left ( \left( 1- \frac{t}{\| a_k - a \|} \right) a + \frac{t}{\| a_k - a \|} a_k  \right)  \\
&\le& \left (1- \frac{t}{\| a_k - a \|} \right) g(a)  + \frac{t}{\| a_k - a \|} g(a^k)  \\
&\le& \max\{g(a),\alpha\}.
\end{eqnarray*}  Letting $k \rightarrow \infty$, we get that $g(a+tv) \le \max\{g(a),\alpha\}$ for all $t \ge 0$.
By assumption, $g$ is bounded below. So, $t \mapsto g(a+tv)$ is either a constant or a polynomial with
even degree $\ge 2$. It then follows that $g$ takes a constant value on $\{a+t v:t \ge 0\}$ for all
$a \in \mathbb{R}^l$.  Then, for all $t \ge 0$ and for any $a \in \mathbb{R}^l$, $g(a-tv)=g(a-tv+tv)=g(a)$.
 Thus,  \begin{equation}\label{eq:pp} g(a)=g(a+tv) \mbox{ for all } a \in \mathbb{R}^l \mbox{ and }
t \in \mathbb{R}.\end{equation}

Let $\tilde{v} := (v^T, 0,\ldots,0)^T \in \mathbb{R}^n$ and $d := A \tilde{v} = \sum_{i = 1}^l v_i e_i \in E_h^\perp.$ Since $v \ne 0,$ $d \ne 0.$ Moreover, for all $x \in \mathbb{R}^n$ and $t \in \mathbb{R},$
$$h(x+td)=h(A(A^{-1}x+t\tilde{v}))=g(z+tv)=g(z)=h(x),$$
where $z=\big((A^{-1}x)_1,\ldots,(A^{-1}x)_l\big) \in \mathbb{R}^l$. So, by definition,  $d \in E_h.$ Consequently, we obtain that
$d \in (E_h \cap E_h^{\bot}) \backslash \{0\},$ which is impossible. Hence, $g$ is coercive.

Since the polynomial $g$ is coercive, there exists ${z^*} := (z_1^*, \ldots, z_l^*) \in {\Bbb R}^l$ such that
$g({z^*}) = \inf_{z \in {\Bbb R}^l} g(z).$ Let
$x^* := A {z^* \choose 0} = z_1^* e_1 + \cdots + z_l^* e_l \in E_h^\perp \subset {\Bbb R}^n.$
Then, $h({x^*}) = \inf_{x \in {\Bbb R}^n} h(x) = g({z}^*).$

\bigskip

\noindent{\bf Proof of Lemma 3.1}.
${\bf (Coercivity)}$ Let $c$ be a real number such that $c \ge f(x_0).$ To prove coercivity of $f$ on $\mathbb{R}^n,$ it suffices to show that the set
$$S := \{x \in {\Bbb R}^n \ | \ f(x) \le c\}$$
is compact. On the contrary, suppose that there exists a sequence $\{a_k\}_{k\ge 0} \subset S$ such that
$\|a_k\| \to \infty$ as $k \to \infty.$ Without lost of generality, we may assume that there exists $v\neq 0$
such that
$$v := \lim_{k \to \infty}\frac{a_k - x_0}{\| a_k - x_0 \|}.$$
Let $t \ge 0.$ For sufficiently large $k$, we have $0 < \frac{t}{\| a_k - x_0 \|} < 1,$ and so
\begin{eqnarray*}
f \left ( x_0 + t \frac{a_k - x_0}{\| a_k - x_0 \|} \right)
&=& f \left ( \left( 1- \frac{t}{\| a_k - x_0 \|} \right) x_0 + \frac{t}{\| a_k - x_0 \|} a_k  \right)  \\
&\le& \left (1- \frac{t}{\| a_k - x_0 \|} \right) f(x_0)  + \frac{t}{\| a_k - x_0 \|} f(a_k)  \\
&\le& c.
\end{eqnarray*}
Letting $k \to \infty,$ we get
$$f(x_0 + tv) \le c, \quad \textrm{ for all }\quad t \ge 0.$$

On the other hand, as the Hessian $\nabla^2 f(x_0)$ is positive definite, $\langle \nabla^2 f(x_0)v, v \rangle  > 0$
and so, for each $t \in {\Bbb R},$
$$f(x_0 + tv) \ = \ f(x_0) + \langle \nabla f(x_0), v \rangle t + \frac{1}{2} \langle \nabla^2 f(x_0)v, v \rangle t^2 + \textrm{ higher order terms in } t.$$
 Hence,  the one dimensional convex polynomial $t \mapsto f(x_0 + tv)$ is of even degree $\ge 2.$ This is a contradiction since $f(x_0 + tv) \le c \quad \textrm{ for all }\quad t \ge 0.$

\noindent ${\bf (Strict \ Convexity)}$ We establish strict convexity of $f$ by the method of contradiction and suppose that $f$ is not strictly convex. Then, there exist $x, y \in {\Bbb R}^n, x \ne y,$ and $t_0 \in (0,1)$ such that
$$f((1 - t_0)  x + t_0 y) = (1 - t_0) f(x) + t_0f(y).$$
Define $h:[0, 1] \to {\Bbb R}$ by $h(t)=f((1 - t)  x + t y) - (1 - t) f(x) - tf(y)$. Then, $h$ is a convex polynomial, $h(t) \le 0$, for each $t\in [0, 1]$ and $h(t_0)=0=\max_{t \in [0,1]} h(t)$. As $h$ is a convex function on $[0,1]$, it attains its maximum on the extreme points of $[0, 1]$, and so,
$$f((1 - t)  x + t y) = (1 - t) f(x) + tf(y), \ \ \forall t\in [0,1 ].$$

Now, define a polynomial $\varphi$ on ${\Bbb R}$ by $\varphi(\lambda) := f\big(x+\lambda(y-x)\big)$, $\lambda \in {\Bbb R}.$ Clearly, $\varphi$ is affine on $[0,1],$ and moreover, it is coercive on $\mathbb{R}$  because $f$ is coercive on $\mathbb{R}^n$, shown above. We show that $\varphi$ is indeed affine over $\mathbb{R}.$ Let the degree of the one-dimensional polynomial $\varphi$ be $d$. Then, for each $\lambda\in {\Bbb R}$,
$$\varphi(\lambda) = \varphi(0) + \varphi'(0) \lambda  + \frac{\varphi''(0)}{2}\lambda^2 + \cdots + \frac{\varphi^{(d)}(0)}{d!} \lambda^d.$$
As $\varphi$ is affine over $[0,1]$, $\varphi^{(i)} (0) = 0$ for $i = 2, \ldots, d$, and so,
$\varphi(\lambda) = \varphi(0) + \varphi'(0)\lambda$. Hence,  $\varphi$ is affine over ${\Bbb R}$. This contradicts the fact that $\varphi$ is coercive on ${\Bbb R}.$
\medskip

\noindent\textbf{Remark 4.1}. The conclusion of Lemma 2.1 may also be derived from error bound results of
convex polynomials (see e.g  \cite{yang} and other references therein). However, for the sake of simplicity and self-containment, we have
given an elementary direct proof for Lemma 2.1.

\end{document}